\documentclass[12pt]{amsart}

\usepackage{amsmath,amssymb,amsthm}
\usepackage{mathtools}
\usepackage[margin=1in]{geometry}
\usepackage{enumitem}
\numberwithin{equation}{section}
\usepackage[colorlinks=true,linkcolor=blue,citecolor=blue,urlcolor=blue]{hyperref}
\usepackage[backend=bibtex,style=alphabetic,maxbibnames=99]{biblatex}
\theoremstyle{plain}
\newtheorem{theorem}{Theorem}[section]
\newtheorem{proposition}[theorem]{Proposition}
\newtheorem{lemma}[theorem]{Lemma}
\newtheorem{corollary}[theorem]{Corollary}
\theoremstyle{definition}
\newtheorem{definition}[theorem]{Definition}
\newtheorem{remark}[theorem]{Remark}
\newtheorem{assumption}[theorem]{Assumption}

\newcommand{\R}{\mathbb{R}}

\newcommand{\dd}{\,\mathrm{d}}
\newcommand{\eps}{\varepsilon}
\renewcommand{\d}{\partial}
\newcommand{\Lop}{\mathcal{L}}

\title[A convexity-type invariant for critical coagulation--fragmentation]
{A convexity-type invariant for the
critical\\ coagulation--fragmentation Hamilton--Jacobi equation}

\author{Truong-Son P. Van}
\address{Ho Chi Minh City University of Technology\\ Vietnam National
University Ho Chi Minh City}
\email{truongson.vanp@gmail.com}

\date{\today}

\begin{document}

\begin{abstract}
	We study the critical coagulation--fragmentation equation with multiplicative
	coagulation kernel $a(s,\hat s)=s\hat s$ and constant fragmentation kernel
	$b(s,\hat s)=1$. Under the Bernstein transform, mass-conserving solutions
	correspond to solutions of a singular Hamilton--Jacobi equation studied by
	Tran and Van (\emph{Comm.\ Pure Appl.\ Math.}\ \textbf{75} (2022), no.\ 6,
	1292--1331).
	Through this correspondence they proved that mass-conserving
	solutions are unique on the full critical range $0<m\le1$, but could establish
	their existence only for $0<m<\tfrac12$. We identify
	a one-sided,
	convexity-type invariant that holds for
	Bernstein-transform data and is propagated by their viscous scheme as a
	genuine maximum-principle bound. We call it the \emph{half-slope
		invariant}. It sharpens the curvature barrier and thereby
	extends mass-conserving existence to the entire critical range
	$0<m\le1$. Hence $m=1$ is the critical mass, confirming the threshold
	predicted by Vigil and Ziff (\emph{J.\ Colloid Interface Sci.}\ \textbf{133}
	(1989), no.\ 1, 257--264). The same invariant appears in the radial
	partial-mass formulation of the two-dimensional Keller--Segel equation, whose
	critical mass is $8\pi$.
\end{abstract}

\maketitle

\section{Introduction}

The coagulation--fragmentation equation describes the evolution of the size
distribution of a population of clusters subject to binary coalescence and
binary break-up.
For  density $c(s,t)\ge 0$ of clusters of (continuous) size $s \geq 0$ at time $t\geq 0$,
it reads
\begin{equation}\label{eq:CF}
	\d_t c(s,t) = Q_c(c)(s,t)+Q_f(c)(s,t),
\end{equation}
where, for nonnegative symmetric coagulation and fragmentation kernels $a$ and
$b$, the coagulation and fragmentation operators are
\begin{align}
	Q_c(c)(s,t) & =\frac12\int_0^s a(y,s-y)\,c(y,t)\,c(s-y,t)\,\dd y
	-c(s,t)\int_0^\infty a(s,y)\,c(y,t)\,\dd y,\label{eq:Qc}         \\
	Q_f(c)(s,t) & =-\frac12\,c(s,t)\int_0^s b(s-y,y)\,\dd y
	+\int_0^\infty b(s,y)\,c(y+s,t)\,\dd y.\label{eq:Qf}
\end{align}

The coagulation mechanism dates back to
Smoluchowski~\cite{Smoluchowski}. The mathematical theory of the full
equation, covering existence, uniqueness, and the long-time behaviour of
solutions, has since been developed extensively. We refer the reader to
Ball--Carr~\cite{BallCarr} and the monograph of
Banasiak--Lamb--Lauren\c{c}ot~\cite{BLL} for a comprehensive account.

A defining feature of these equations is that solutions need not conserve mass.
When coagulation dominates, mass can be lost to clusters of infinite size in
finite time, a phenomenon known as \emph{gelation}, while strong fragmentation
can produce clusters
of size zero~\cite{EMP,ELMP}. For the borderline multiplicative coagulation
kernel $a(s,\hat s)=s\hat s$ and constant fragmentation kernel $b\equiv1$,
Escobedo--Lauren\c{c}ot--Mischler--Perthame~\cite{ELMP} singled out a critical
regime in which the occurrence of mass conservation or gelation is governed by
the \emph{size of the initial mass} rather than by the kernels. A balance
computation of Vigil--Ziff~\cite{VigilZiff} for the zeroth moment predicts the
critical mass to be $m=1$: globally mass-conserving solutions are expected for
initial mass $m\le1$, and gelation for $m>1$. Proving this dichotomy, and in
particular the existence of mass-conserving solutions throughout the critical
range $0<m\le1$, has remained open. Existence has been available only for small
mass: Lauren\c{c}ot~\cite{Laurencot} obtained mass-conserving solutions for
$0<m<\tfrac{1}{4\log2}$, subsequently improved to $0<m<\tfrac12$ in~\cite{TV}.
For arbitrary $m>0$ with finite second moment, Tran and the author constructed
in~\cite{TVlocal} a unique local-in-time mass-conserving solution, so on the
critical range the question is whether the local solution extends globally.

A fruitful approach to this borderline problem goes back to Menon and
Pego~\cite{MenonPego}, who introduced the \emph{Bernstein transform}, a
desingularized Laplace transform, to analyse Smoluchowski's coagulation
equation. The idea is to convert the nonlocal coagulation--fragmentation
equation into a
\emph{local} equation for the transform. Under mass conservation the Bernstein
transform $F(x,t)=\int_0^\infty(1-e^{-sx})c(s,t)\dd s$ satisfies a singular
Hamilton--Jacobi equation, \eqref{eq:HJ} below. In~\cite{TV}, Tran
and the author
developed a theory of viscosity solutions for this equation, which lies outside
the classical Crandall--Lions framework because of the singular zeroth-order
term $F/x$. They used it to prove uniqueness of mass-conserving solutions on the
entire critical range $0<m\le1$, existence on the subrange $0<m<\tfrac12$, and
non-existence for $m>1$.
A version of this non-existence result appeared earlier in~\cite{BLL}, via a different method.

The existence proof rests on a one-sided
curvature bound
$xF_{xx}\ge-1$ for the viscosity solution.
It is precisely the derivation of
this bound that has been confined to $m<\tfrac12$.

The purpose of this work is to remove that confinement. We identify a
single a priori estimate, the one-sided, convexity-type invariant
\(W := 2M - xM_x \ge 0\) for \(M := mx - F\) (see~\eqref{eq:invariant}).
We call this the \emph{half-slope invariant}.
It holds
automatically for Bernstein-transform initial data and is propagated
by the viscous approximations of \cite{TV}. It sharpens the estimates
behind the curvature barrier, so that \(xF_{xx} > -1\) now holds on
the entire range \(0 < m \le 1\).
This, in turn, is the key to
proving that \(F\) is Bernstein (made precise in Section~\ref{sec:Bernstein}). Hence we finally confirm the
predicted critical mass \(m = 1\).

\subsection{The critical case and its Bernstein transform}
\label{sec:Bernstein}
Throughout we take the multiplicative coagulation kernel and constant
fragmentation kernel
\begin{equation}\label{eq:kernels}
	a(s,\hat s)=s\hat s,\qquad b(s,\hat s)=1 .
\end{equation}

We recall from \cite{TV} the notion of solution we work with.

\begin{definition}[Weak solution in the measure sense]\label{def:measure}
	For each $t\ge0$, let $c_t(\dd s)$ be a positive Radon measure on
	$(0,\infty)$. We say that $(c_t)_{t\ge0}$ is a \emph{weak solution in the
		measure sense} of \eqref{eq:CF} with the kernels \eqref{eq:kernels} if, for
	every test function $\phi\in BC([0,\infty))\cap\mathrm{Lip}([0,\infty))$
	with $\phi(0)=0$,
	\begin{multline*}
		\frac{\dd}{\dd t}\int_0^\infty\phi(s)\,c_t(\dd s)
		=\frac12\int_0^\infty\!\!\int_0^\infty
		\bigl(\phi(s+\hat s)-\phi(s)-\phi(\hat s)\bigr)\,
		s\hat s\,c_t(\dd s)\,c_t(\dd\hat s)\\
		-\frac12\int_0^\infty\!\!\int_0^s
		\bigl(\phi(s)-\phi(\hat s)-\phi(s-\hat s)\bigr)\dd\hat s\,c_t(\dd s) .
	\end{multline*}
	The solution is \emph{mass-conserving} if
	$m_1(t)=\int_0^\infty s\,c_t(\dd s)=m_1(0)$ for all $t\ge0$.
\end{definition}

Vigil and Ziff~\cite{VigilZiff} predicted that the critical mass
should be $m=1$
via the following simple calculation.
Integrating \eqref{eq:CF}
against $\phi\equiv 1$ gives, for the zeroth moment $m_0(t)=\int_0^\infty
	c(s,t)\dd s$,
\begin{equation}\label{eq:zeroth}
	\frac{\dd}{\dd t}m_0(t)=\tfrac12\,m_1(t)\bigl(1-m_1(t)\bigr).
\end{equation}
If $m_1\equiv m>1$, then $m_0$ becomes negative in finite time, signalling
gelation. For $0<m\le1$ the zeroth moment stays nonnegative.
This, however, is purely formal because $\phi \equiv 1$ does not belong to the class of test
functions in Definition~\ref{def:measure}.

Our main result is the following.

\begin{theorem}\label{thm:main}
	Let $c_0=c(\cdot,0)$ be a nonnegative measure on $(0,\infty)$ with
	$m_1(0)=m\in(0,1]$ and bounded zeroth and second moments, that is,
	\[
		m_0(0)=\int_0^\infty c_0(s)\dd s<\infty
		\qquad\text{and}\qquad
		m_2(0)=\int_0^\infty s^2c_0(s)\dd s<\infty .
	\]
	Then the coagulation--fragmentation equation~\eqref{eq:CF} has a unique
	mass-conserving weak solution in the measure sense of
	Definition~\ref{def:measure}.
	For $m>1$, by contrast,
	\eqref{eq:CF} has no global mass-conserving solution.
	Hence $m=1$ is the
	critical mass.
\end{theorem}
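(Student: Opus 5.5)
Uniqueness on $0<m\le1$ and non-existence for $m>1$ are already established in \cite{TV} via the Bernstein-transform correspondence (and, for non-existence, also in \cite{BLL}), so the new content is existence of a global mass-conserving solution for $\tfrac12\le m\le1$. The plan follows the existence scheme of \cite{TV}. Pass to $F(x,t)=\int_0^\infty(1-e^{-sx})c_t(\dd s)$, which for mass-conserving solutions solves the singular Hamilton--Jacobi equation \eqref{eq:HJ}. Construct the viscous approximations $F^\eps$ of \cite{TV}, and obtain the unique viscosity solution $F$ as the limit. Finally, show that $F(\cdot,t)$ is a Bernstein function with $F_x(0,t)=m$, so that it is the transform of a measure $c_t$ with first moment $m$. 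In \cite{TV} the only place where $m<\tfrac12$ is used is the derivation of the one-sided curvature bound $xF_{xx}\ge-1$. Given that bound, the remaining steps of \cite{TV} transfer verbatim: complete monotonicity of $F_x$ via the barrier argument, identification of the measure, and the weak formulation of Definition~\ref{def:measure}. I would also use the local solution of \cite{TVlocal}, with finite second moment, to fix regularity near $x=0$ and to check that the constructed solution agrees with it on short times.

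The key steps, in order, are as follows.

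\begin{enumerate}[label=(\roman*)]
\item Verify the half-slope invariant $W=2M-xM_x\ge0$, with $M=mx-F$, for the initial data. Here $M(x)=\int_0^\infty(sx-1+e^{-sx})c_0(\dd s)$, so
\[
W=\int_0^\infty \bigl(sx-2+(2+sx)e^{-sx}\bigr)\,c_0(\dd s).
\]
The integrand $g(y)=y-2+(2+y)e^{-y}$ satisfies $g(0)=0$ and $g'(y)=1-(1+y)e^{-y}\ge0$, so $W\ge0$.
\item Derive the parabolic equation satisfied by $W^\eps=2M^\eps-xM^\eps_x$ from the viscous equation by differentiation. I expect it to take the form
\[
W_t=\eps(\cdots)W_{xx}+b(x,t)\,W_x+c(x,t)\,W,
\]
with a zeroth-order coefficient that may be singular like $1/x$ but has a controllable sign or bound. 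Then apply the maximum principle on $(0,\infty)$, with boundary behaviour $W^\eps(0)=0$ and the correct growth at infinity, to propagate $W^\eps\ge0$, uniformly in $\eps$.
\item Use $W\ge0$, i.e.\ $xF_x\ge 2F-mx$, inside the curvature-barrier computation of \cite{TV}. There, after differentiating twice, one must control $Z=xF_{xx}$ from below, and the error terms involve $F/x$ and $F_x$. The old argument bounded these using only $0\le F_x\le m$ and $F\le mx$, which costs a factor leading to $m<\tfrac12$. Replacing that with $F/x\le \tfrac12(m+F_x)$ from the invariant should halve the bad term. The aim is to show that $-1+\delta$ is a strict subsolution barrier for $xF^\eps_{xx}$ for all $m\le1$, with the endpoint $m=1$ handled by a limiting argument or by strictness coming from $F_x<m$ for $x>0$.
\item Pass $\eps\to0$, keeping $xF_{xx}\ge-1$ in the viscosity sense. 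Then conclude via \cite{TV} that $F$ is Bernstein and conserves mass.
\end{enumerate}

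The main obstacle is step (ii): finding the exact equation for $W$ and checking that its zeroth-order coefficient, together with the singular $F/x$ term, does not destroy the maximum principle near $x=0$ and at $x=\infty$. If direct differentiation does not give a clean sign, the first thing I would try is to look for a favourable coupling. Specifically, writing $W=x^3\partial_x(M/x^2)\cdot(-1)$ suggests working with $M/x^2$, whose monotonicity is equivalent to the invariant, and proving that $\partial_x(M/x^2)\le0$ is preserved, using comparison for the first-order equation satisfied by $M/x^2$. The secondary difficulty is the endpoint $m=1$ in step (iii), where the barrier is only non-strict. There I would approximate $c_0$ by data of mass $m_k\uparrow1$ and use stability of viscosity solutions together with the uniqueness already known.
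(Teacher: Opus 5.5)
Your architecture is the paper's: check $W=2M-xM_x\ge0$ at $t=0$ (your step (i) is exactly Lemma~\ref{lem:initial}), propagate it through the viscous scheme, and insert $F/x\le\tfrac12(m+F_x)$, i.e.\ $B\le M_x$, into the first-crossing argument for $xF^\eps_{xx}$. The gap is step (ii), which you leave as an expectation, and the form you expect is wrong. Because the diffusion coefficient $A=\eps a+\delta$ depends on $x$, the equation for $W$ is not homogeneous. One gets
\[
W_t-\bigl(q+\tfrac12\bigr)W_x-A\,W_{xx}+\frac{S}{x}\,W_x+\frac{3}{2x}\,W=\frac{S}{x}\,q,\qquad S=2A-xA',\quad q=M_x .
\]
The zeroth-order coefficient $\tfrac{3}{2x}$ has the good sign. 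The minimum principle still only works because the source $\tfrac{S}{x}q$ is nonnegative, and that needs two inputs your plan lacks. The first is $S\ge0$, a structural property of the cutoff: $2a-xa'$ vanishes at $0$ and has derivative $a'-xa''\ge0$, since $a$ is nondecreasing and concave. The second is $q=m-F^{\eps,\delta}_x\ge0$, the gradient bound. It rests on $0$ being a subsolution of the viscous equation, i.e.\ $\tfrac12m(m-1)\le0$, and this is where $m\le1$ actually enters.

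The boundary behaviour must also be made quantitative. The paper works at $\delta>0$ on $[\eta,R]$. It uses $W\ge-m\eta$ at $x=\eta$, from $M\ge0$ and $q\le m$. At $x=R$ large it uses $W\ge mx-2(\|F_0\|_{L^\infty}+mT)\ge0$, which is where $m_0(0)<\infty$ is needed. It then lets $\delta\to0$, using concavity of $F^{\eps,\delta}$ to get pointwise convergence of $F^{\eps,\delta}_x$.

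Second, your ``secondary difficulty'' at $m=1$ does not exist, so the approximation by masses $m_k\uparrow1$ is unnecessary. It would not be free either: in the limit you would have to show that no first moment escapes to large sizes, which is the gelation question itself. With $B\le M_x$ the first-crossing quantity obeys
\[
\alpha^2+\alpha\bigl(\tfrac32+M_x-2\eps\bigr)+B\le\alpha^2+\alpha\bigl(\tfrac32-2\eps\bigr)+(1+\alpha)M_x\longrightarrow-\tfrac12+2\eps<0\qquad(\alpha\to-1),
\]
independently of $m$. Halving $B$ does not move the threshold from $\tfrac12$ to $1$; it cancels the $M_x$-dependence altogether, so $\eps<\tfrac14$ suffices for every $0<m\le1$.

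Finally, the transfer from \cite{TV} is not quite verbatim. Their characteristic bound $-1\le-(m+\tfrac12)\le\dot X$ also uses $m\le\tfrac12$ and must be replaced by $-\tfrac32$. This is harmless, since only boundedness and strict negativity of $\dot X$ are used. The appeal to \cite{TVlocal} is not needed.
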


Following~\cite{TV}, we apply the \emph{Bernstein transform}
\begin{equation}\label{eq:Bernstein}
	F(x,t)\stackrel{\text{def}}{=}\int_0^\infty\bigl(1-e^{-sx}\bigr)\,c(s,t)\dd s,
	\qquad (x,t)\in[0,\infty)^2 .
\end{equation}
For each fixed $x\ge0$, the function $s\mapsto1-e^{-sx}$ is bounded and
Lipschitz on $[0,\infty)$ and vanishes at $s=0$, so it is an admissible test
function in Definition~\ref{def:measure}.
Writing $m_1(t)=\int_0^\infty s\,c(s,t)\dd s$ for the total mass and assuming
mass conservation $m_1(t)\equiv m$, the transform $F$ solves the singular
Hamilton--Jacobi equation
\begin{equation}\label{eq:HJ}
	\begin{gathered}
		F_t+\tfrac12\,(F_x-m)(F_x-m-1)+\frac{F}{x}-m=0
		\quad\text{in }(0,\infty)^2,\\
		0\le F\le mx ,\qquad F(x,0)=F_0(x).
	\end{gathered}
\end{equation}
The constraint $0\le F\le mx$ is intrinsic to the transform: $c\ge0$ gives
$F\ge0$, while $1-e^{-sx}\le sx$ gives $F\le mx$.
The Hamiltonian
\begin{equation*}
	H(p,z,x)=\tfrac12(p-m)(p-m-1)+\frac{z}{x} -m
\end{equation*}
is monotone but not Lipschitz in $z$ as
$\d_z H=1/x\to+\infty$ as $x\to0^+$, so \eqref{eq:HJ} lies outside the classical
Crandall--Lions theory.
In~\cite{TV}, the authors extended the classical theory of viscosity
solutions to study~\eqref{eq:HJ}.

The real difficulty is the following.
To produce a mass-conserving solution $c$ of~\eqref{eq:CF},
one must show that the viscosity solution $F$ of~\eqref{eq:HJ}
is Bernstein in $x$ for each $t>0$, .i.e,
\[
	F\in C^\infty((0,\infty)^2)\cap C^1([0,\infty)^2)
\]
and
\[
	(-1)^{n+1}\,\d_x^n F\ge0\quad\text{on }(0,\infty)^2\quad\text{for
		every }n\ge1 \, ,
\]
with the boundary derivative $F_x (0,t) = m$ so that $F$ is the transform of a measure of mass $m$.

\subsection{The known results}
We recall the relevant results of~\cite{TV}. On the full critical range there is uniqueness:
\begin{quote}
	\emph{If $0<m\le1$ and $F_0$ is Lipschitz, sublinear, $0\le F_0\le mx$, then
		\eqref{eq:HJ} has a unique Lipschitz sublinear viscosity solution},
	\emph{whence \eqref{eq:CF} has at most one mass-conserving
		solution}.
\end{quote}
For $m>1$ there is non-existence: \eqref{eq:HJ} admits no sublinear
$C^1$ solution, so \eqref{eq:CF} has no global mass-conserving
solution.
The existence side, however, is established only on a strict
subrange:
\begin{quote}
	\emph{If $F_0$ is the Bernstein transform of $c_0$ with
		$m_1(0)=m\in(0,\tfrac12)$
		and bounded zeroth and second moments, then the viscosity solution $F$ of~\eqref{eq:HJ}
		is Bernstein,
		whence \eqref{eq:CF} has a
		mass-conserving
		weak (measure) solution}.
\end{quote}
Thus the window $\tfrac12\le m\le1$ is open.
The purpose of this work is to
isolate a single a~priori estimate which, once established, removes the
artificial factor $\tfrac12$ and brings existence up to the critical mass.

On the
discrete side, Jang and Tran~\cite{JangTran} state the critical-mass
conjecture in the form used here and prove existence of mass-conserving
solutions exactly on the range $0<m<\tfrac12$, so the discrete analogue of the
window $\tfrac12\le m\le1$ remains open as of this writing.

Finally, we mention that the large time behaviour of solutions
of~\eqref{eq:HJ} was studied by Mitake, Tran, and the author~\cite{MTV}, with
complete characterizations of the stationary solutions and optimal conditions
for convergence.

\subsection{Standing hypotheses on the initial datum}
Throughout we adopt conditions in~\cite{TV} on the Bernstein-transform datum
$F_0$, recorded here so that the present note is self-contained.

\begin{assumption}[Conditions on the initial datum,
		after~\cite{TV}]\label{ass:data}
	There exist $\beta\in(0,1)$ and $C>0$ such that
	\begin{enumerate}[label=\textup{(A\arabic*)},ref=\textup{(A\arabic*)},leftmargin=3.2em]
		\item\label{A1} $0\le F_0'(x)\le m$ and $F_0'(0)=m$,
		\item\label{A2} $-C\le F_0''(x)\le0$,
		\item\label{A3} $\displaystyle-\frac{m}{e}\le xF_0''(x)\le0$
		      \quad and\quad
		      $\bigl\|xF_0''\bigr\|_{C^{0,\beta}([0,\infty))}\le C$.
	\end{enumerate}
\end{assumption}

\noindent These conditions follow whenever $F_0$ is the Bernstein transform of a
nonnegative measure $c_0$ with mass $m_1(0)=m$ and bounded second moment
$m_2(0)=\int_0^\infty s^2\,c_0(s)\dd s<\infty$. Indeed, differentiating
$F_0(x)=\int_0^\infty(1-e^{-sx})c_0(s)\dd s$ gives
$xF_0''(x)=-\int_0^\infty s\,\varphi(sx)\,c_0(s)\dd s$ with
$\varphi(z)=ze^{-z}\in[0,e^{-1}]$, whence $-m/e\le xF_0''\le0$. Moreover
$\varphi$ is $1$-Lipschitz and bounded, so
$|\varphi(sx)-\varphi(sy)|\le(2/e)^{1-\beta}\bigl(s\,|x-y|\bigr)^{\beta}$
for any
$\beta\in(0,1)$. Hence
\[
	\bigl|xF_0''(x)-yF_0''(y)\bigr|
	\le(2/e)^{1-\beta}\,|x-y|^{\beta}\int_0^\infty s^{1+\beta}c_0(s)\dd s,
\]
and $\int_0^\infty s^{1+\beta}c_0\,\dd s\le m^{1-\beta}\,m_2(0)^{\beta}<\infty$
by interpolation, which bounds $\|xF_0''\|_{C^{0,\beta}}$ and
gives~\ref{A3}. In Theorem~\ref{thm:main} we additionally assume
$m_0(0)<\infty$,
which gives $F_0\in L^\infty$ and is used in the localization at
spatial infinity
(Proposition~\ref{prop:visc-prop}).

\subsection{Main idea}
Introduce
\begin{equation}\label{eq:Mq}
	M(x,t)=mx-F(x,t),\qquad q(x,t)=M_x(x,t)=m-F_x(x,t).
\end{equation}
A direct substitution (Section~\ref{sec:transformed}) turns \eqref{eq:HJ} into
the clean form
\begin{equation}\label{eq:Meq}
	\,M_t=\tfrac12\,q(q+1)-\frac{M}{x}\,,
	\qquad q=M_x .
\end{equation}
The estimate we propagate is the convexity-type, one-sided bound
\begin{equation}\label{eq:invariant}
	\,W:=2M-xM_x \ge 0 \,
	\qquad\Longleftrightarrow\qquad
	\frac{M}{x}\ge\frac12\,M_x\,.
\end{equation}
A function for which the property $W\ge0$ holds for all time is said to
possess the \emph{half-slope invariant}.
This is exactly the inequality that the curvature-barrier
argument of~\cite{TV} did not exploit. We show three things. First,
\eqref{eq:invariant} holds at $t=0$ for any
Bernstein-transform datum (Lemma~\ref{lem:initial}). Second, it is
propagated by the
inviscid flow (Lemma~\ref{lem:inviscid}) and, rigorously, by the viscous
$\delta$-regularization of~\cite{TV} (Proposition~\ref{prop:visc-prop} and
Corollary~\ref{cor:eps}). Third, it upgrades the bound on the dangerous
coefficient in the barrier from $B\le2M_x$ to $B\le M_x$, which is what the
$\tfrac12$-threshold was paying for (Section~\ref{sec:removal}).

\subsection{A curious Keller--Segel analogue}
The half-slope invariant has a curious analogue in the radial Keller--Segel
equation, where the same one-sided quantity satisfies a linear parabolic
equation with no zeroth-order term, see \eqref{eq:KSH}. This is
purely an aside, recorded in
Section~\ref{sec:legendre}. It plays no role in the proof.

\subsection{Organization}
Section~\ref{sec:transformed} carries out the substitution $M=mx-F$ and records
the resulting equation for $M$. Section~\ref{sec:initial} discusses the
half-slope invariant of $M$: it is verified at $t=0$ for
Bernstein-transform data and
then shown to propagate, first under the inviscid flow, and then, as a genuine
maximum-principle estimate, under the viscous $\delta$-regularization
of~\cite{TV}. Section~\ref{sec:removal} uses the invariant to sharpen the bound
on the dangerous coefficient to $B\le M_x$, thereby extending the curvature
barrier $xF_{xx}>-1$ to the entire range $0<m\le1$.
Section~\ref{sec:completion} proves Theorem~\ref{thm:main}. Finally,
Section~\ref{sec:legendre} records the Keller--Segel analogue of the half-slope
invariant.

\section{The transformed equation}\label{sec:transformed}

Start from \eqref{eq:HJ} and set $M=mx-F$, so that
\[
	F=mx-M,\qquad F_x=m-M_x,\qquad F_t=-M_t .
\]
Then $F_x-m=-M_x$ and $F_x-m-1=-M_x-1$, while
$\tfrac{F}{x}-m=\tfrac{mx-M}{x}-m=-\tfrac{M}{x}$. Substituting,
\[
	-M_t+\tfrac12(-M_x)(-M_x-1)-\frac{M}{x}=0,
\]
which is \eqref{eq:Meq}. With $q=M_x$ this reads $M_t=\tfrac12 q(q+1)-M/x$.

For Bernstein data the new unknown has a transparent representation. If
$F_0(x)=\int_0^\infty(1-e^{-sx})c_0(s)\dd s$ and $m=\int_0^\infty
	s\,c_0(s)\dd s$,
then
\begin{equation}\label{eq:M0}
	M_0(x)=mx-F_0(x)=\int_0^\infty\bigl(sx-1+e^{-sx}\bigr)c_0(s)\dd s,
\end{equation}
\begin{equation}\label{eq:M0x}
	M_{0,x}(x)=m-F_{0,x}(x)=\int_0^\infty s\bigl(1-e^{-sx}\bigr)c_0(s)\dd s .
\end{equation}
Because $sx\ge1-e^{-sx} \geq 0$ and $1-e^{-sx}\geq 0$,
it follows that $M_0\ge0$ and $M_{0,x}\ge0$.
The latter is the statement $0\le F_{0,x}\le m$.
These representations are the basis for the initial invariant.

\section{The half-slope invariant}\label{sec:initial}
\subsection{Invariant propagation for inviscid flow}
We first note the following fact at $t=0$.
\begin{lemma}
	\label{lem:initial}
	Let $F_0$ be the Bernstein transform of a nonnegative measure $c_0$ on
	$(0,\infty)$ with finite first moment $m=\int_0^\infty s\,c_0(s)\dd s<\infty$.
	Then $M_0=mx-F_0$ satisfies
	\[
		2M_0(x)-x\,M_{0,x}(x)\ge0\qquad\text{for all }x\ge0 .
	\]
\end{lemma}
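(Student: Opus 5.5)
Using the representations \eqref{eq:M0} and \eqref{eq:M0x}, I will write $W_0=2M_0-xM_{0,x}$ as a single integral against $c_0$:
\[
	2M_0(x)-xM_{0,x}(x)=\int_0^\infty \Bigl(2\bigl(sx-1+e^{-sx}\bigr)-sx\bigl(1-e^{-sx}\bigr)\Bigr)c_0(s)\dd s
	=\int_0^\infty g(sx)\,c_0(s)\dd s,
\]
where
\[
	g(z)=2(z-1+e^{-z})-z(1-e^{-z})=z-2+(2+z)e^{-z}.
\]
Since $c_0\ge0$, it suffices to show that $g(z)\ge0$ for all $z\ge0$. Interchanging the integrals with the differentiation is justified because $m<\infty$: both integrands are bounded by a constant multiple of $sx\,c_0(s)$, which is integrable, so dominated convergence applies. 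At $x=0$ both sides vanish.

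For the scalar inequality I will use the elementary derivative argument. We have $g(0)=0$ and
\[
	g'(z)=1+e^{-z}-(2+z)e^{-z}=1-(1+z)e^{-z}.
\]
This is nonnegative because $e^{z}\ge1+z$. Hence $g$ is nondecreasing on $[0,\infty)$, and therefore $g\ge0$. Alternatively, one can note $g(z)=e^{-z}\bigl((z-2)e^{z}+z+2\bigr)$ and check that the bracket has Taylor coefficients $\tfrac{1}{n!}\bigl(n-2\bigr)\cdot\tfrac{1}{1}$ adjusted appropriately. The monotonicity argument is simpler, however, and is the one I will write.

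There is no real obstacle. The only point requiring care is the justification that the representations \eqref{eq:M0} and \eqref{eq:M0x} hold for a general nonnegative measure with finite first moment, which follows by dominated convergence as noted above. It is also worth remarking that $g(z)\sim z^3/6$ near $0$, so the inequality is strict away from $x=0$ whenever $c_0\neq0$.
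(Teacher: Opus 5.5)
Your proof is correct and is essentially the paper's argument: both write $2M_0-xM_{0,x}=\int_0^\infty \psi(sx)\,c_0(\dd s)$ with $\psi(z)=z-2+(z+2)e^{-z}$, and both conclude from $\psi(0)=0$ and $\psi'(z)=1-(1+z)e^{-z}\ge0$, using $e^{z}\ge1+z$. Your Taylor-series aside is garbled as written (the coefficients of $(z-2)e^{z}+z+2$ are $(n-2)/n!$ for $n\ge3$ and vanish for $n\le2$), but you do not rely on it, so nothing is lost.
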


\begin{proof}
	From \eqref{eq:M0}--\eqref{eq:M0x},
	\[
		2M_0-x M_{0,x}
		=\int_0^\infty\Bigl[\,2(sx-1+e^{-sx})-x\,s(1-e^{-sx})\,\Bigr]c_0(s)\dd s
		=\int_0^\infty\Bigl[\,sx-2+(sx+2)e^{-sx}\,\Bigr]c_0(s)\dd s .
	\]
	Set $z=sx\ge0$ and $\psi(z)=z-2+(z+2)e^{-z}$. Then $\psi(0)=0$ and
	\[
		\psi'(z)=1-(z+1)e^{-z}\ge0\qquad(z\ge0),
	\]
	because $z+1\le e^{z}$ for all $z\ge0$. Hence $\psi\ge0$ on
	$[0,\infty)$, and the
	integrand $\psi(sx)c_0(s)$ is nonnegative. Therefore $2M_0-xM_{0,x}\ge0$.
\end{proof}

Throughout, we denote
\begin{equation}
	\label{eq:W}
	W=2M-xM_x \qquad  \text{and} \qquad q=M_x \,.
\end{equation}
We then have the following identities
\begin{equation}\label{eq:Wderivs}
	W_x = M_x-xM_{xx} = q-xq_x \,,
	\qquad
	W_{xx} = -x\,M_{xxx} = -x\,q_{xx} \,.
\end{equation}

\begin{lemma}\label{lem:inviscid}
	Let $M$ be a smooth solution of \eqref{eq:Meq} on a region of $(0,\infty)^2$.
	Then
	\begin{equation}\label{eq:invisc-id}
		\,W_t-\Bigl(M_x+\tfrac12\Bigr)W_x+\frac{3}{2x}\,W=0\,.
	\end{equation}
	Consequently, as long as the solution stays smooth and its characteristics
	remain in $(0,\infty)$,
	$W(\cdot,0)\ge0$ implies $W(\cdot,t)\ge0$\,.
\end{lemma}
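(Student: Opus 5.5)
The plan is a direct computation followed by a characteristic (Gr\"onwall-type) argument.

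\emph{Step 1: the identity \eqref{eq:invisc-id}.} I would differentiate \eqref{eq:Meq} in $x$, using $q=M_x$, to get
\[
M_{xt}=\Bigl(q+\tfrac12\Bigr)q_x-\frac{q}{x}+\frac{M}{x^2}.
\]
Then
\[
W_t=2M_t-xM_{xt}
=q(q+1)-\frac{2M}{x}-x\Bigl(q+\tfrac12\Bigr)q_x+q-\frac{M}{x}
=q^2+2q-\frac{3M}{x}-x\Bigl(q+\tfrac12\Bigr)q_x .
\]
By \eqref{eq:Wderivs}, $W_x=q-xq_x$, so
\[
\Bigl(q+\tfrac12\Bigr)W_x=q^2+\tfrac12 q-x\Bigl(q+\tfrac12\Bigr)q_x .
\]
Subtracting, the second-order terms $x(q+\tfrac12)q_x$ cancel. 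What remains is
\[
W_t-\Bigl(q+\tfrac12\Bigr)W_x=\tfrac32 q-\frac{3M}{x}=-\frac{3}{2x}\bigl(2M-xq\bigr)=-\frac{3}{2x}W,
\]
which is \eqref{eq:invisc-id}. The one thing to get right here is the coefficient $-3M/x$, namely $-2M/x$ from $2M_t$ plus $-M/x$ from $-x\cdot M/x^2$. It is exactly this coefficient that makes the zeroth-order term proportional to $W$ rather than leaving a source term.

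\emph{Step 2: propagation of the sign.} Equation \eqref{eq:invisc-id} is a linear transport equation for $W$ with coefficient $a(x,t)=-(M_x+\tfrac12)$, which is smooth wherever $M$ is. I would introduce characteristics $X(t)$ solving
\[
\dot X=-\Bigl(M_x(X,t)+\tfrac12\Bigr),\qquad X(0)=x_0>0 .
\]
Along such a curve,
\[
\frac{\dd}{\dd t}W(X(t),t)=-\frac{3}{2X(t)}\,W(X(t),t),
\]
and hence
\[
W(X(t),t)=W(x_0,0)\exp\Bigl(-\int_0^t\frac{3}{2X(\tau)}\dd\tau\Bigr).
\]
The exponential factor is positive and finite as long as $X$ stays in $(0,\infty)$, which is precisely the stated hypothesis. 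Therefore $W(x_0,0)\ge0$ implies $W(X(t),t)\ge0$.

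To conclude at every point of the region, I would run the characteristic backward from any $(x,t)$ to time $0$. This is possible under the standing assumption that the characteristics remain in $(0,\infty)$, together with the local Lipschitz continuity of $M_x$, which gives uniqueness of the ODE. I expect no real obstacle here. The only delicate point is the singular coefficient $3/(2x)$ near $x=0$, and it is excluded by hypothesis. The rigorous treatment of that point is deferred to the viscous maximum-principle argument in Proposition~\ref{prop:visc-prop}.
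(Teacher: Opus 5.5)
Your proposal is correct and follows essentially the same route as the paper: differentiate \eqref{eq:Meq} in $x$, subtract $(q+\tfrac12)W_x$ from $W_t$ so that the $q_x$ terms cancel, and integrate $\dot W=-\tfrac{3}{2X}W$ along the characteristics $\dot X=-(q+\tfrac12)$. Your explicit exponential formula and backward-characteristic step correspond to the paper's sign-retention statement and the remark that follows it.
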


\begin{proof}
	Differentiating \eqref{eq:Meq} in $x$ gives
	$q_t=(q+\tfrac12)q_x-q/x+M/x^2$.
	Therefore,
	\[
		W_t=2M_t-xq_t
		=q^2+2q-\frac{3M}{x}-x\Bigl(q+\tfrac12\Bigr)q_x .
	\]
	Since $\bigl(q+\tfrac12\bigr)W_x=\bigl(q+\tfrac12\bigr)q
		-x\bigl(q+\tfrac12\bigr)q_x$, we get
	\begin{equation*}
		W_t-(q+\tfrac12)W_x=\tfrac32 q-\tfrac{3M}{x}=-\tfrac{3}{2x}W \,,
	\end{equation*}
	which is~\eqref{eq:invisc-id}. Along a characteristic $\dot
		X=-(q+\tfrac12)$ one has
	\begin{equation*}
		\dot W=-\tfrac{3}{2X}W\,.
	\end{equation*}
	As a consequence, $W$ retains its sign along every characteristic that
	remains in $(0,\infty)$, where the coefficient $3/(2X)$ stays finite.
\end{proof}

\begin{remark}
	When $q\ge0$ the characteristics move to the left
	with speed at least $\tfrac12$, so each of them reaches $x=0$ in finite
	time, and $3/(2X)$ blows up there. Sign preservation survives, because the
	backward-in-time characteristic through any point of $(0,\infty)^2$ moves
	away from $x=0$ and stays in $(0,\infty)$ up to $t=0$.
\end{remark}
\subsection{Viscous approximations}
In~\cite{TV}, the authors construct the regular solution as the
vanishing-viscosity limit of
\begin{equation}\label{eq:visc-F}
	\begin{gathered}
		F_t + \tfrac12 (F_x-m)(F_x-m-1) + \frac{F}{x}-m =
		A_{\eps,\delta}(x)\,F_{xx},
		\qquad A_{\eps,\delta}(x)=\eps\,a(x)+\delta,\\
		F(x,0)=F_0(x),\qquad F(0,t)=0,
	\end{gathered}
\end{equation}
where $\eps>0$, $\delta\ge0$, and $a\in C^\infty([0,\infty))$ is nonnegative,
nondecreasing and concave with
\begin{equation}\label{eq:cutoff}
	a(x)=x \ \text{ on }[0,1],\qquad a(x)=2 \ \text{ on }[3,\infty) \,.
\end{equation}
For clarity, we will always write $A(x)$ for $A_{\eps,\delta}(x)$.
In
the $M$-variable, \eqref{eq:visc-F} becomes
\begin{equation}\label{eq:visc-M}
	M_t=A(x)\,M_{xx}+\tfrac12\,M_x(M_x+1)-\frac{M}{x}.
\end{equation}

\begin{lemma}\label{lem:viscous}
	Let $M$ solve \eqref{eq:visc-M} classically. Then,
	with the notations~\eqref{eq:W},
	\begin{equation}\label{eq:visc-id}
		W_t-\Bigl(q+\tfrac12\Bigr)W_x-A\,W_{xx}
		+\frac{S}{x}\,W_x+\frac{3}{2x}\,W
		=\frac{S}{x}\,q\,,
	\end{equation}
	where $S(x):=2A(x)-xA'(x)$.
\end{lemma}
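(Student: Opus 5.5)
The plan is to repeat the computation in the proof of Lemma~\ref{lem:inviscid}, carrying the extra viscous terms through. Then I rewrite every remaining derivative of $q$ in terms of $W_x$ and $W_{xx}$ using \eqref{eq:Wderivs}. Throughout, $W=2M-xq$ with $q=M_x$, and $M$ is classical, so $q_t$ may be obtained by differentiating \eqref{eq:visc-M} in $x$.

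\emph{Step 1 (differentiate the equation).} Differentiating \eqref{eq:visc-M} in $x$ gives
\[
q_t = A'q_x + A q_{xx} + \Bigl(q+\tfrac12\Bigr)q_x - \frac{q}{x} + \frac{M}{x^2}.
\]
The first two terms are new relative to the inviscid case. The remaining terms are exactly those already computed in Lemma~\ref{lem:inviscid}.

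\emph{Step 2 (form $W_t$ and split it).} Next I form $W_t = 2M_t - xq_t$ and split it into two parts.
\begin{itemize}
\item The inviscid part is identical to the one in Lemma~\ref{lem:inviscid}. It equals $(q+\tfrac12)W_x - \tfrac{3}{2x}W$.
\item The viscous contribution is $2AM_{xx} - xA'q_x - xAq_{xx}$. Since $M_{xx}=q_x$, this equals $(2A - xA')q_x - xAq_{xx}$, that is, $S\,q_x - xAq_{xx}$.
\end{itemize}

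\emph{Step 3 (rewrite in terms of $W$).} By \eqref{eq:Wderivs}, $-xq_{xx}=W_{xx}$, so $-xAq_{xx}=A\,W_{xx}$. Also by \eqref{eq:Wderivs}, $W_x = q - xq_x$, which gives $q_x = (q - W_x)/x$ for $x>0$. Hence
\[
S\,q_x = \frac{S}{x}\,q - \frac{S}{x}\,W_x.
\]
Collecting terms,
\[
W_t = \Bigl(q+\tfrac12\Bigr)W_x - \frac{3}{2x}W + A W_{xx} - \frac{S}{x}W_x + \frac{S}{x}q.
\]
Rearranging this is exactly \eqref{eq:visc-id}.

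\emph{Expected obstacle.} There is no real obstacle; the argument is a bookkeeping identity. The one step that needs care is the choice in Step 3: to express $q_x$ through $W_x$ rather than keep it as a separate term. This choice is what produces the specific structure of \eqref{eq:visc-id}, namely a drift correction $\tfrac{S}{x}W_x$ together with the source term $\tfrac{S}{x}q$.

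That source term is presumably what the later maximum-principle argument has to control. One has $S\ge0$ because $a$ is concave with $a(0)=0$, so $xa'\le a$. Combined with $q\ge0$, this makes the right-hand side nonnegative, which favours $W\ge0$. That observation is not needed for the identity itself.
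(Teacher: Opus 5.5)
Your proposal is correct and follows essentially the paper's route: differentiate \eqref{eq:visc-M} in $x$, form $W_t=2M_t-xq_t$, and eliminate $q_x$ and $q_{xx}$ using $W_x=q-xq_x$ and $W_{xx}=-xq_{xx}$. Reusing the inviscid part from Lemma~\ref{lem:inviscid} is legitimate, because that cancellation is purely algebraic in $q,q_x,M,x$; only the viscous piece $S\,q_x-xAq_{xx}$ needs new bookkeeping, and you handle it correctly.
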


\begin{proof}
	Differentiating \eqref{eq:visc-M} in $x$,
	$q_t=A'q_x+Aq_{xx}+(q+\tfrac12)q_x-q/x+M/x^2$. Using
	$W_t=2M_t-xq_t$, $W_x=q-xq_x$ and $W_{xx}=-xq_{xx}$, a direct
	computation gives
	\[
		W_t=q^2+2q-\frac{3M}{x}+(2A-xA')q_x-xAq_{xx}-x\Bigl(q+\tfrac12\Bigr)q_x ,
	\]
	and assembling the stated combination, all $q_x,q_{xx},q^2,M/x$ terms cancel,
	leaving \eqref{eq:visc-id}.
\end{proof}

\begin{lemma}
	\label{lem:cutoff}
	Let $a$ be as in \eqref{eq:cutoff}.
	One has $2a(x)-xa'(x)\ge0$ for all $x\ge0$.
	Consequently, for $A=\eps a+\delta$,
	\[
		S(x)=2A-xA'=\eps\,(2a-xa')+2\delta\ \ge\ 0 .
	\]
\end{lemma}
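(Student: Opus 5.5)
The function $g(x):=2a(x)-xa'(x)$ can be handled by using concavity of $a$ together with $a(0)=0$ and $a\ge0$. The identity for $S$ is then immediate by linearity.

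\emph{Step 1: tangent-line bound.} Since $a$ is concave and smooth on $[0,\infty)$, its graph lies below every tangent line. Taking the tangent at $x$ and evaluating at $0$ gives
\[
	a(0)\le a(x)+a'(x)(0-x)=a(x)-xa'(x).
\]
By \eqref{eq:cutoff}, $a(0)=0$, so $a(x)-xa'(x)\ge0$ for all $x\ge0$.

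\emph{Step 2: conclude $g\ge0$.} We have
\[
	g(x)=a(x)+\bigl(a(x)-xa'(x)\bigr)\ge a(x)\ge0,
\]
using the nonnegativity of $a$. Alternatively, one can check $g$ piecewise. On $[0,1]$, $g=2x-x=x\ge0$. On $[3,\infty)$, $a'=0$, so $g=4$. On $[1,3]$, the bound $a'\ge0$ (monotonicity) is not enough by itself, since it has the wrong sign. There we rely on the concavity argument of Step 1, giving $g\ge a\ge a(1)=1$. In fact the concavity argument covers all $x$ uniformly, so the piecewise check is unnecessary.

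\emph{Step 3: the statement for $S$.} Since $A=\eps a+\delta$ with $\delta$ constant, $A'=\eps a'$. Hence
\[
	S=2A-xA'=\eps(2a-xa')+2\delta.
\]
This is nonnegative because $\eps>0$, $\delta\ge0$, and $2a-xa'\ge0$ by Step 2.

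The only point needing care is the middle interval $[1,3]$, where $a$ is not given explicitly. Concavity, via the tangent-line inequality anchored at $a(0)=0$, is exactly the hypothesis that controls it. No real obstacle is expected.
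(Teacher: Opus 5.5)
Your proof is correct, but it takes a different route from the paper's. The paper sets $g=2a-xa'$, notes $g(0)=0$, and differentiates: $g'=a'-xa''\ge0$ because $a'\ge0$ and $a''\le0$, so $g$ is nondecreasing and hence nonnegative.

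You stay at first order instead. The supporting-line inequality for the concave function $a$, taken at $x$ and evaluated at the origin, gives $a(x)-xa'(x)\ge a(0)=0$, i.e.\ $a(x)/x$ is nonincreasing. Then $g=a+(a-xa')\ge a\ge0$.

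Your route needs only concavity, differentiability, $a(0)=0$ and $a\ge0$; it uses no second derivative and no monotonicity. It also yields the quantitative bound $g\ge a$, hence $S\ge \eps a+2\delta=A+\delta$. The paper's route uses both $a'\ge0$ and $a''\le0$, and in exchange shows that $g$ is monotone in $x$.

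For the smooth cutoff in \eqref{eq:cutoff} both sets of hypotheses hold, so either argument suffices. As you note yourself, the piecewise check in your Step 2 is superfluous.
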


\begin{proof}
	Let $g(x)=2a(x)-xa'(x)$. Then $g(0)=2a(0)=0$ (as $a(x)=x$ near $0$) and
	$g'(x)=a'(x)-x\,a''(x)\ge0$, because $a'\ge0$ ($a$ nondecreasing), $a''\le0$
	($a$ concave) and $x\ge0$. Thus $g$ is nondecreasing with $g(0)=0$,
	so $g\ge0$.
	Adding the $\delta$-layer only increases $S$ by $2\delta\ge0$.
\end{proof}

\subsection{Invariant propagation for the viscous approximations}

We now prove $W\ge0$ as a genuine maximum-principle estimate, first for the
uniformly parabolic problem ($\delta>0$).

\begin{proposition}[Propagation for $F^{\eps,\delta}$]\label{prop:visc-prop}
	Let $0<m\le1$, $\eps>0$, $\delta>0$, $A=\eps a+\delta$ with $a$ as in
	\eqref{eq:cutoff}, and let $F_0$ be bounded and satisfy \ref{A1}--\ref{A2}
	with $F_0(0)=0$. Let $F^{\eps,\delta}$ be the
	classical solution of
	\eqref{eq:visc-F} with $F^{\eps,\delta}(0,t)=0$,
	$F^{\eps,\delta}(\cdot,0)=F_0$.
	Put $M=mx-F^{\eps,\delta}$, $q=M_x$, $W=2M-xM_x$. If $W(\cdot,0)\ge0$, then
	\[
		\,W(x,t)\ge0\quad\text{for all }x>0,\ t\ge0 \,.
	\]
\end{proposition}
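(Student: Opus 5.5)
We prove the bound with a weak maximum principle for the linear parabolic inequality furnished by Lemma~\ref{lem:viscous}. The right-hand side of \eqref{eq:visc-id} is $\frac{S}{x}q$. Here $S\ge0$ by Lemma~\ref{lem:cutoff}, and $q=M_x=m-F^{\eps,\delta}_x\ge0$, which we take from the gradient bounds of~\cite{TV} for the viscous scheme: under \ref{A1}--\ref{A2} one has $0\le F^{\eps,\delta}_x\le m$. Hence $W$ is a supersolution:
\[
\Lop W:=W_t-\Bigl(q+\tfrac12\Bigr)W_x-A\,W_{xx}+\frac{S}{x}W_x+\frac{3}{2x}W\ \ge\ 0 .
\]
The zeroth-order coefficient $\frac{3}{2x}$ is nonnegative. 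So the plan is a minimum-principle argument on $(0,\infty)\times[0,T]$, with care at the two ends $x=0$ and $x=\infty$.

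\emph{Boundary at $x=0$.} Since $F^{\eps,\delta}(0,t)=0$, we have $M(0,t)=0$. By the $C^1$ regularity up to $x=0$ (valid for $\delta>0$), $W(0,t)=2M(0,t)-0=0$ and $W$ is continuous on $[0,\infty)\times[0,T]$. Thus $W\ge0$ on the parabolic boundary part $\{x=0\}\cup\{t=0\}$. The singular coefficients $S/x$ and $3/(2x)$ do not matter, because a negative interior minimum is located at some $x_0>0$, where all coefficients are finite.

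\emph{Infinity.} Here the boundedness of $F_0$ (i.e.\ $m_0(0)<\infty$) enters. Since $0\le F^{\eps,\delta}\le C$ (comparison with constants for \eqref{eq:visc-F}) and $0\le F_x\le m$, we get $W=2mx-2F-x(m-F_x)=mx-2F+xF_x\ge -2C$. Moreover $W$ grows like $mx$ unless $xF_x$ is large negative, which it is not. So $W$ is bounded below and in fact $W\to+\infty$ as $x\to\infty$, provided $m>0$. Concretely, $W\ge mx-2C$, so $W>0$ for $x>2C/m$. This localizes the problem to $[0,R]$ with $R=2C/m+1$, on whose right edge $W>0$.

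\emph{Contradiction step.} Set $\widetilde W=e^{\lambda t}W$, or simply argue directly. Suppose $\min_{[0,R]\times[0,T]}W<0$, attained at $(x_0,t_0)$ with $x_0\in(0,R)$ and $t_0>0$. There $W_t\le0$, $W_x=0$, $W_{xx}\ge0$, so
\[
\Lop W\le \frac{3}{2x_0}W(x_0,t_0)<0,
\]
contradicting $\Lop W\ge0$. Strictness is automatic here because $W<0$ and $\frac{3}{2x_0}>0$, so no perturbation is needed.

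The main obstacle is justifying the inputs rather than the maximum principle itself: (i) $q\ge0$ along the viscous flow, which I would derive from the equation for $q$ (differentiate \eqref{eq:visc-M}) by its own minimum principle, using $q(0,t)=m-F_x(0,t)$ and a boundary analysis at $x=0$; and (ii) enough regularity of $W$ up to $x=0$, so that $W(0,t)=0$ holds with continuity. If the gradient bounds of~\cite{TV} are available, step (i) is quoted. Otherwise I would first prove $q\ge0$ via the same localization, using that $q_t=Aq_{xx}+(A'+q+\tfrac12)q_x-\frac{q}{x}+\frac{M}{x^2}$ and $M\ge0$, where $M\ge0$ itself follows from a minimum principle for \eqref{eq:visc-M} with $M(0,t)=0$.
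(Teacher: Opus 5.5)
Your argument is essentially the paper's proof. You use the same supersolution inequality $\Lop W=\frac{S}{x}q\ge0$, the same positivity of $W$ on a far right edge (from an $L^\infty$ bound on $F^{\eps,\delta}$ plus $F^{\eps,\delta}_x\ge0$), and the same interior-minimum contradiction through the term $\frac{3}{2x_0}W$. The paper instead works with $Y=W+m\eta$ on $[\eta,R]$ and lets $\eta\to0$; your use of $W(0,t)=0$ and continuity at $x=0$ is equivalent. In fact $0\le q\le m$ and $M(0,t)=0$ give $|W|\le 3mx$, so you do not need $C^1$ regularity up to the boundary.

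One step is wrong as written: ``comparison with constants.'' For $\phi\equiv C$, the left side of \eqref{eq:visc-F} minus its right side equals $\tfrac12 m(m-1)+C/x$. This is negative for $x>2C/(m(1-m))$, so for $0<m<1$ a constant is not a supersolution. Indeed, far out (where $F_x$, $F_{xx}$ and $F/x$ are negligible) one has $F_t\approx\tfrac12 m(1-m)>0$, so $F^{\eps,\delta}$ need not stay below $\|F_0\|_{L^\infty}$.

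Because you work on a fixed strip $[0,T]$, a $T$-dependent bound suffices, and that is what the paper proves: $F^{\eps,\delta}\le K_T=\|F_0\|_{L^\infty}+mT$. To get it, note that $F_x\le m$ and $F\ge0$ give $F_t\le AF_{xx}+m$. Then apply a maximum principle on the half-line, controlled at infinity by the linear growth $F\le mx$ and a barrier such as $\gamma\bigl(x^2+2(2\eps+\delta)t\bigr)$. With this correction, your $R=2K_T/m+1$ depends on $T$, which is harmless, and the rest of your proof goes through unchanged.
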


The proof of Proposition~\ref{prop:visc-prop} rests on a gradient bound for
$F^{\eps,\delta}$. In~\cite{TV} this bound is their Lemma~3.5. The statement
there concerns the degenerate approximation obtained after $\delta\downarrow0$,
and the uniformly parabolic case $\delta>0$ that we use is treated inside its
proof. For self-consistency we give a complete proof, following the scheme of
Lemma~3.1 of~\cite{TV} with the localization made explicit.

\begin{lemma}[Gradient bound for the uniformly parabolic
		problem]\label{lem:Fxbound}
	Let $0<m\le1$, $\eps>0$, $\delta>0$, and let $F_0$ satisfy \ref{A1}
	and~\ref{A2} with $F_0(0)=0$. Let $F^{\eps,\delta}$ be the classical
	solution of \eqref{eq:visc-F}.
	Then $0\le F^{\eps,\delta}\le mx$ and
	\begin{equation}\label{eq:Fxbound}
		0\le F^{\eps,\delta}_x\le m \,.
	\end{equation}
\end{lemma}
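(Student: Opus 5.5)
We will work throughout with the $M$-formulation \eqref{eq:visc-M}. The claims $0\le F\le mx$ and $0\le F_x\le m$ are equivalent to $M\ge0$, $M\le mx$, $0\le q\le m$ with $q=M_x$. The plan is to get the zeroth-order bounds from comparison, and then the gradient bounds from a maximum principle for the equation satisfied by $q$.

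\emph{Zeroth-order bounds.} The constant $F\equiv0$ is a subsolution of \eqref{eq:visc-F}: the left side equals $\tfrac12 m(m+1)-m=\tfrac12 m(m-1)\le0$ when $m\le1$. The function $F=mx$ solves it exactly, since $0+0+m-m=0$. Both agree with the data on $x=0$ and bracket $F_0$ at $t=0$. A comparison principle for \eqref{eq:visc-F} then gives $0\le F\le mx$. The zeroth-order term $F/x$ has the favourable sign, and the problem is uniformly parabolic since $\delta>0$. Because the domain is unbounded, we localize with a penalization $\eta(x^2+1)^{1/2}+\eta' t$ added to the difference. This uses the boundedness of $F_0$, and of $F$ on finite time intervals, which is standard for the classical solution. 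We then let $\eta\downarrow0$.

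\emph{Gradient bounds.} Differentiating \eqref{eq:visc-M} gives
\[
q_t=Aq_{xx}+\bigl(A'+q+\tfrac12\bigr)q_x-\frac{q}{x}+\frac{M}{x^2}.
\]
\begin{itemize}
\item \emph{Lower bound $q\ge0$.} The source $M/x^2$ is nonnegative by the previous step. At a negative interior minimum of $q$, the zeroth-order term $-q/x$ is positive, so $q_t>0$ there and a negative minimum cannot appear.
\item \emph{Upper bound $q\le m$.} Use instead that $M\le mx$ and that $M(x)=\int_0^x q\le x\max q$. At an interior maximum point, if $q(x_0)>m$ then $-q/x+M/x^2\le -q/x+ m/x<0$. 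This contradicts $q_t\ge0$ at a first time the maximum exceeds $m$.
\end{itemize}
Strictly, we run both arguments on $q\mp\eta(\cdot)$ with a small growth term, to get strict inequalities and to control $x\to\infty$.

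\emph{Boundary behaviour, which I expect to be the main obstacle.} Near $x=0$ the coefficient $1/x$ is singular, so we must control $q$ as $x\to0$ and show that no extremum escapes to the boundary. Since $F(0,t)=0$ and $F$ is $C^1$ up to $x=0$, we have $M=o(x)$ there. We will then show $q(0,t)=0$, i.e.\ $F_x(0,t)=m$, from the equation restricted to $x=0$, following Lemma~3.1 of~\cite{TV}. If that is delicate, the fallback is a barrier argument: compare $F$ near $x=0$ with $mx-Kx^2$ and $mx$, using \ref{A2} to start the comparison at $t=0$. This pins $q$ between $0$ and $Kx$ near the boundary and excludes boundary extrema.

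At infinity we localize with a cutoff $\phi_R(x)$ and use the uniform bounds on $F$ together with $F_x\in L^\infty$, the latter from the classical theory on finite time intervals. The error terms are $O(1/R)$, and we let $R\to\infty$. Combining the three steps gives \eqref{eq:Fxbound}.
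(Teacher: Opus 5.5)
There is a genuine gap, and it is in the step you flagged yourself: both of your treatments of $x=0$ aim at $q(0,t)=0$, which is not available for the uniformly parabolic problem.

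Because $A(0)=\delta>0$, letting $x\to0^+$ in \eqref{eq:visc-F} gives only one relation. Using $F_t(0,t)=0$ and $F/x\to F_x$, and writing $r=F_x(0,t)-m$, you get $\tfrac12 r(r+1)=\delta F_{xx}(0,t)$. This forces $r\in\{0,-1\}$ only when the diffusion vanishes at $x=0$, as in the degenerate $\eps$-problem. It does not do so here. Keeping $r=0$ would impose $F_{xx}(0,t)=0$ as an extra boundary condition. Accordingly, the paper only asserts $p(0,t)\in[0,m]$. Your remark that $M=o(x)$ near $x=0$ already presupposes $q(0,t)=0$, since in general $M=q(0,t)\,x+o(x)$.

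The fallback barrier also fails. For $\phi=mx-Kx^2$ one computes $\phi_t+\tfrac12(\phi_x-m)(\phi_x-m-1)+\phi/x-m-A\phi_{xx}=2K^2x^2+2KA>0$. So $\phi$ is a strict supersolution and gives no lower bound on $F$. Moreover, the intended conclusion $0\le q\le Kx$ would again force $q(0,t)=0$.

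The repair is that you never need $q(0,t)=0$, only $0\le q(0,t)\le m$, and this follows at once from your zeroth-order step. The bounds $0\le M\le mx$ and continuity of $q$ up to $x=0$ give $q(0,t)=\lim_{x\to0^+}M(x,t)/x\in[0,m]$; this is the paper's Step~2. Together with \ref{A1} at $t=0$, your interior argument then closes on $q\mp(\nu x+\sigma t)$. Here $\nu$ is taken small relative to $\sigma$ and to a bound for $|A'+q+\tfrac12|$, which comes from $\sup|F_x|<\infty$ on strips.

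That interior argument is correct and somewhat more direct than the paper's. You read off the sign of $-q/x+M/x^2=(M-xq)/x^2$ from $0\le M\le mx$ at a point where $q>m$ or $q<0$. The paper instead writes $F=x\,p(\theta x,t)$ and uses that $p(x_0,t_0)-p(\theta x_0,t_0)$ has the favourable sign at an extremum of the penalized function. Your version avoids the nonlocal mean-value representation, but it needs the contradiction hypothesis ($q>m$ or $q<0$) at the extremum. The paper's version works at any extremum.

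One smaller point concerns your zeroth-order step. The lemma does not assume $F_0$ bounded, since \ref{A1} only gives $0\le F_0\le mx$, so you cannot invoke boundedness of $F$. A linear weight $\eta\sqrt{1+x^2}$ does not dominate differences of size $(\sup|F_x|+m)\,x$. A quadratic weight such as $e^{\lambda t}(1+x^2)$, as in the paper, fixes this.
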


\begin{proof}
	Write $F=F^{\eps,\delta}$ and $p=F^{\eps,\delta}_x$, and fix a strip
	$[0,\infty)\times[0,T]$. By \ref{A1} and $F_0(0)=0$, integration gives
	$0\le F_0(x)\le mx$. From the theory of parabolic equations~\cite{LSU}, $F$
	is smooth on $(0,\infty)^2$, and both $F$ and $p$ are continuous on the
	strip with $\Lambda_T:=\sup_{[0,\infty)\times[0,T]}|p|<\infty$.

	\emph{Step 1. We show $0\le F\le mx$.}
	Write
	\[
		N[\phi]:=\phi_t+\tfrac12(\phi_x-m)(\phi_x-m-1)+\frac{\phi}{x}-m
		-A\,\phi_{xx},
	\]
	so that $N[F]=0$ and $N[mx]=0$, while $m\le1$ gives
	$N[0]=\tfrac12 m(m-1)\le0$. Let $w$ denote either $0-F$ or $F-mx$. In both
	cases $w=u-v$ with $N[u]\le N[v]$, and the mean value theorem applied to the
	quadratic term yields
	\[
		w_t+b\,w_x+\frac{w}{x}-A\,w_{xx}\le0,
		\qquad b(x,t)=\xi(x,t)-m-\tfrac12,
	\]
	where $\xi(x,t)$ lies between $u_x$ and $v_x$. Since $u_x$ and $v_x$ take
	the values $0$, $p$ or $m$, we get $|b|\le\Lambda_T+2m+1=:\bar b_0$.
	Moreover, $w\le0$ on $\{t=0\}$, $w=0$ on $\{x=0\}$, and
	$|w|\le(\Lambda_T+m)\,x$.

	We claim that $w \leq 0$. Set
	$h(x,t):=e^{\lambda t}(1+x^{2})$ with $\lambda:=\bar b_0+2(2\eps+\delta)$.
	Using $2x\le1+x^{2}$, $A\le2\eps+\delta$ and $e^{\lambda t}\le h$,
	\[
		h_t+b\,h_x+\frac{h}{x}-A\,h_{xx}
		\ \ge\ \lambda h-\bar b_0(1+x^{2})e^{\lambda t}
		-2(2\eps+\delta)e^{\lambda t}\ \ge\ 0 .
	\]
	Fix $\gamma>0$ and set $U:=w-\gamma h$, so that
	$U_t+b\,U_x+U/x-A\,U_{xx}\le0$. As $|w|\le(\Lambda_T+m)x$, $U\to-\infty$
	when $x\to\infty$ uniformly in $t$, so $U$ attains its maximum over the
	strip at some $(x_0,t_0)$. If this maximum were positive, then $x_0>0$ and
	$t_0>0$, because $U\le w\le0$ on $\{t=0\}$ and $U=-\gamma e^{\lambda t}<0$
	on $\{x=0\}$. At such a maximum $U_t\ge0$, $U_x=0$, $U_{xx}\le0$ and
	$U/x_0>0$, whence $U_t+b\,U_x+U/x_0-A\,U_{xx}>0$, a contradiction. Hence
	$U\le0$, and $\gamma\downarrow0$ gives $w\le0$, which proves
	$0\le F\le mx$.

	\emph{Step 2. We identify boundary values of $p$.}
	By Step 1, $0\le F(x,t)/x\le m$, so $F(0,t)=0$ and continuity of $p$ up to
	$x=0$ give $p(0,t)=\lim_{x\to0^{+}}F(x,t)/x\in[0,m]$. Together with
	\ref{A1} at $t=0$, $0\le p\le m$ on the parabolic boundary
	$\{t=0\}\cup\{x=0\}$ of the strip.

	\emph{Step 3.}
	Introduce the linear parabolic operator
	\[
		\Lop^{\eps,\delta}\phi:=\phi_t+\Bigl(p-m-\tfrac12-A'\Bigr)\phi_x
		-A\,\phi_{xx},
	\]
	Differentiating \eqref{eq:visc-F} in $x$ and writing, by the mean value
	theorem and $F(0,t)=0$, $F(x,t)=x\,p(\theta x,t)$ with
	$\theta=\theta(x,t)\in(0,1)$, we get
	\[
		\Lop^{\eps,\delta}p+\frac{p(x,t)-p(\theta x,t)}{x}=0
		\qquad\text{for }x>0,\ 0<t\le T .
	\]
	Since $0\le a'\le1$ by \eqref{eq:cutoff} and concavity, the drift
	coefficient is bounded, with $|p-m-\tfrac12-A'|\le\Lambda_T+m+1+\eps
		=:\bar b$.

	\emph{Step 4. We show $p\le m$.}
	Fix $\sigma>0$ and $0<\nu<\sigma/\bar b$, and consider
	$\varphi:=p-\nu x-\sigma t$ on the strip. As $\varphi\to-\infty$ when
	$x\to\infty$, the maximum of $\varphi$ is attained at some $(x_0,t_0)$. If
	$x_0>0$ and $t_0>0$, then $p_t\ge\sigma$, $p_x=\nu$, $p_{xx}\le0$, and
	$\varphi(x_0,t_0)\ge\varphi(\theta x_0,t_0)$ gives
	$p(x_0,t_0)-p(\theta x_0,t_0)\ge0$. The first three facts and the drift
	bound give $\Lop^{\eps,\delta}p\,(x_0,t_0)\ge\sigma-\bar b\,\nu$, the fourth
	makes the nonlocal term nonnegative, and the equation of Step 3 then forces
	\[
		0\ =\ \Lop^{\eps,\delta}p\,(x_0,t_0)
		+\frac{p(x_0,t_0)-p(\theta x_0,t_0)}{x_0}
		\ \ge\ \sigma-\bar b\,\nu\ >\ 0,
	\]
	a contradiction. Hence the maximum is attained
	on $\{t=0\}\cup\{x=0\}$, where $\varphi\le p\le m$ by Step 2. Therefore
	$p\le m+\nu x+\sigma t$, and letting $\nu\downarrow0$ and then
	$\sigma\downarrow0$ gives $p\le m$.

	\emph{Step 5. We show $p\ge0$.}
	Symmetrically, the minimum of $\psi:=p+\nu x+\sigma t$ over the strip is
	attained. At a minimum point with $x_0>0$ and $t_0>0$, the reversed
	inequalities $p_t\le-\sigma$, $p_x=-\nu$, $p_{xx}\ge0$ and
	$p(x_0,t_0)-p(\theta x_0,t_0)\le0$ turn the equation of Step 3 into
	$0\le-\sigma+\bar b\,\nu<0$, a contradiction. Hence the minimum is attained
	on $\{t=0\}\cup\{x=0\}$, where $\psi\ge p\ge0$ by Step 2, and letting
	$\nu\downarrow0$ and then $\sigma\downarrow0$ gives $p\ge0$.
\end{proof}

\begin{proof}[Proof of Proposition~\ref{prop:visc-prop}]
	Write the linear operator on the left of \eqref{eq:visc-id} as
	\[
		\Lop\phi:=\phi_t-\Bigl(q+\tfrac12\Bigr)\phi_x+\frac{S}{x}\phi_x
		-A\,\phi_{xx}+\frac{3}{2x}\phi,
		\qquad\text{so that}\qquad \Lop W=\frac{S}{x}\,q .
	\]
	By Lemma~\ref{lem:cutoff}, $S\ge0$, and by \eqref{eq:Fxbound},
	$q=m-F^{\eps,\delta}_x\ge0$.
	Therefore, $\Lop W=\tfrac{S}{x}q\ge0$. The zeroth-order coefficient
	$3/(2x)$ of $\Lop$
	is nonnegative and the second-order coefficient $-A\le0$, so $\Lop$
	is a proper
	uniformly parabolic operator on $[\eta,R]$  for the minimum principle.

	Fix $T>0$.

	\emph{Step 1.}
	We first claim that
	\[
		0\ \le\ F^{\eps,\delta}(x,t)\ \le\ K_T:=\|F_0\|_{L^\infty}+mT
		\qquad\text{on }[0,\infty)\times[0,T] .
			\]
			The lower bound, together with $F^{\eps,\delta}\le mx$, is part of
			Lemma~\ref{lem:Fxbound}.
			For the upper bound, \eqref{eq:Fxbound} gives $F^{\eps,\delta}_x-m\le0$ and
			$F^{\eps,\delta}_x-m-1\le-1<0$, so
			$(F^{\eps,\delta}_x-m)(F^{\eps,\delta}_x-m-1)\ge0$. Since also
			$F^{\eps,\delta}/x\ge0$ by the lower bound, equation~\eqref{eq:visc-F} yields
			the differential inequality
			\[
			F^{\eps,\delta}_t\ \le\ A\,F^{\eps,\delta}_{xx}+m
			\qquad\text{in }(0,\infty)\times(0,T] .
	\]
	Hence $V:=F^{\eps,\delta}-\|F_0\|_{L^\infty}-mt$ satisfies
	\[
		V_t\le A\,V_{xx},\qquad
		V(\cdot,0)\le0,\qquad V(0,\cdot)\le0,\qquad V\le mx .
	\]
	Here the linear growth $V\le mx$ comes from $F^{\eps,\delta}\le mx$, and $A$
	is bounded with $0<\delta\le A\le2\eps+\delta$. We claim that these facts
	force $V\le0$ on $[0,\infty)\times[0,T]$. Fix $\gamma>0$ and set
	$U:=V-\gamma\bigl(x^{2}+2(2\eps+\delta)t\bigr)$. Then $U_t\le A\,U_{xx}$,
	because $A\le2\eps+\delta$. Moreover, $U\le0$ on $\{t=0\}$ and on $\{x=0\}$,
	and $U\le mx-\gamma x^{2}\le0$ wherever $x\ge m/\gamma$. The classical
	maximum principle on the rectangle $[0,m/\gamma]\times[0,T]$ then forces
	$U\le0$ on all of $[0,\infty)\times[0,T]$. Letting $\gamma\downarrow0$ gives
	$V\le0$. Thus $0\le F^{\eps,\delta}\le K_T$, as claimed.

	Consequently, at the right edge, using
	$W=2(mx-F)-x(m-F_x)=mx-2F+xF_x$ and $F_x\ge0$,
	\[
		W(x,t)\ge mx-2K_T,
		\qquad\text{so}\qquad W(R,t)\ge0\ \ \text{whenever }R\ge\frac{2K_T}{m}.
	\]

	\emph{Step 2.} Since $M(0,t)=0$ and $M_x=q\in[0,m]$,
	\[
		M(x,t)=\int_0^x q(y,t)\dd y\in[0,mx] \,.
	\]
	Therefore,
	\begin{equation*}
		W(x,t)=2M-xq\ge-xq\ge-mx .
	\end{equation*}
	In particular $W(\eta,t)\ge-m\eta$ for every $\eta>0$.

	\emph{Step 3 (localized minimum principle).} Fix $0<\eta<R$ with
	$R\ge 2K_T/m$,
	and set $Y:=W+m\eta$. By Steps~1--2, $Y\ge0$ on the parabolic
	boundary $\partial Q_{\eta,R}:=\partial((\eta,R)\times(0,T])$.
	Indeed $Y(x,0)=W(x,0)+m\eta\ge0$,
	$Y(\eta,t)\ge-m\eta+m\eta=0$, and $Y(R,t)\ge0$. Moreover,
	\[
		\Lop Y=\Lop W+\frac{3}{2x}\,m\eta=\frac{S}{x}q+\frac{3}{2x}m\eta\ge0 .
	\]
	If $Y$ had a negative interior minimum at $(x_0,t_0)\in Q_{\eta,R}$, then
	$Y(x_0,t_0)<0$, $Y_t(x_0,t_0)\le0$, $Y_x(x_0,t_0)=0$,
	$Y_{xx}(x_0,t_0)\ge0$, so
	\[
		\Lop Y(x_0,t_0)=Y_t-A\,Y_{xx}+\frac{3}{2x_0}Y
		\le\frac{3}{2x_0}\,Y(x_0,t_0)<0,
	\]
	contradicting $\Lop Y\ge0$. Hence $Y\ge0$ on $\overline{Q_{\eta,R}}$, i.e.
	$W\ge-m\eta$ on $[\eta,R]\times[0,T]$. Letting $\eta\to0^+$ gives $W\ge0$ on
	$(0,R)\times[0,T]$.
	Since also $W\ge0$ for $x\ge R$ (Step~1) and
	$R$ may be taken
	arbitrarily large, $W\ge0$ on $(0,\infty)\times[0,T]$. As $T>0$ was arbitrary,
	the proof is complete.
\end{proof}

\begin{corollary}[Propagation for $F^{\eps}$]\label{cor:eps}
	Let $F^{\eps}=\lim_{\delta\to0^+}F^{\eps,\delta}$, and
	$M^\eps=mx-F^{\eps}$. Then
	\[
		2M^\eps-xM^\eps_x\ge0\quad\text{on }(0,\infty)\times[0,\infty) \,.
	\]
\end{corollary}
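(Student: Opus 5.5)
The plan is to pass to the limit $\delta\to0^+$ in the conclusion of Proposition~\ref{prop:visc-prop}, using that $W$ is built from $M$ and $M_x$ only, both of which converge under the compactness that \cite{TV} uses to define $F^\eps$.

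\emph{Step 1 (hypotheses of the proposition).} The datum $F_0$ satisfies \ref{A1}--\ref{A2} with $F_0(0)=0$. It is bounded because $m_0(0)<\infty$ gives $F_0\le m_0(0)$. Lemma~\ref{lem:initial} gives $W(\cdot,0)=2M_0-xM_{0,x}\ge0$. Hence Proposition~\ref{prop:visc-prop} applies for every $\delta>0$ and yields
\[
	2M^{\eps,\delta}-xM^{\eps,\delta}_x\ge0\quad\text{on }(0,\infty)\times[0,\infty),
\]
where $M^{\eps,\delta}=mx-F^{\eps,\delta}$.

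\emph{Step 2 (convergence).} Next I recall in what sense $F^{\eps,\delta}\to F^\eps$.

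\begin{itemize}
\item Lemma~\ref{lem:Fxbound} gives $0\le F^{\eps,\delta}_x\le m$ uniformly in $\delta$. Step~1 of the proof of Proposition~\ref{prop:visc-prop} gives $0\le F^{\eps,\delta}\le K_T$ uniformly in $\delta$.
\item On any compact $K\subset(0,\infty)\times(0,\infty)$ the operator $A=\eps a+\delta\ge\eps a(x)\ge\eps\min(x,2)>0$ is uniformly parabolic, uniformly in $\delta\in(0,1]$. Interior parabolic estimates \cite{LSU} for the quasilinear equation with bounded gradient then give uniform $C^{2+\alpha,1+\alpha/2}_{\rm loc}$ bounds on $(0,\infty)^2$.
\end{itemize}

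Along the full family (the limit being unique, as in \cite{TV}), $F^{\eps,\delta}\to F^\eps$ in $C^{1}_{\rm loc}((0,\infty)^2)$. Therefore $M^{\eps,\delta}\to M^\eps$ and $M^{\eps,\delta}_x\to M^\eps_x$ pointwise on $(0,\infty)^2$, and the inequality passes to the limit for $t>0$.

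\emph{Step 3 (time $t=0$).} At $t=0$ we have $M^\eps(\cdot,0)=M_0$, so the inequality is exactly Lemma~\ref{lem:initial}.

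\emph{Main obstacle.} The delicate point is the convergence of $M_x$, not just of $M$. If only locally uniform convergence of $F^{\eps,\delta}$ were available, I would argue instead as follows. The uniform bound on $F_x$, together with the nondegeneracy of $A$ away from $x=0$, still gives local $C^{1,\alpha}$ compactness in $x$. Failing that, one can rewrite $W\ge0$ in the weak form
\[
	\frac{d}{dx}\bigl(M/x^2\bigr)=-W/x^3\le0,
\]
that is, $x\mapsto M(x,t)/x^2$ is nonincreasing. This monotonicity is stable under pointwise convergence. Once $M^\eps$ is known to be $C^1$ in $x$ (from the parabolic regularity for $\eps>0$), it yields $2M^\eps-xM^\eps_x\ge0$ at every point.
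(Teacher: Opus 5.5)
Your argument is correct, but it obtains the key step, convergence of $M^{\eps,\delta}_x$ and not just of $M^{\eps,\delta}$, by a different mechanism from the paper.

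\textbf{The paper's route.} It gets only locally uniform convergence of $F^{\eps,\delta}$ from uniform parabolicity away from $x=0$. It then upgrades this to pointwise convergence of $F^{\eps,\delta}_x$ by importing two facts from \cite{TV}: concavity of each $F^{\eps,\delta}$ in $x$ (from inside the proof of their Lemma~3.5), and $C^1$-regularity of the limit (their Lemma~3.6). These are combined with the convex-analysis fact that derivatives of concave functions converge at points of differentiability of the limit.

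\textbf{Your main route.} You use the uniform gradient bound of Lemma~\ref{lem:Fxbound} and $0\le F^{\eps,\delta}/x\le m$ to make the lower-order terms uniformly bounded. Interior $W^{2,1}_p$/Schauder estimates then give $C^{1+\alpha}$ bounds, uniform in $\delta$, on compacts of $(0,\infty)^2$, hence $C^1_{\mathrm{loc}}$ convergence of the whole family, since the limit is given. This needs no concavity and produces the $C^1$-regularity of $F^\eps$ instead of citing it. The price is invoking the parabolic regularity machinery, which the paper already leans on for its $C^0$ convergence.

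\textbf{Your fallback.} This is the most economical argument. Since $\partial_x(M/x^2)=-W/x^3$, the invariant is equivalent to monotonicity of $x\mapsto M(x,t)/x^2$. Monotonicity survives mere pointwise limits, so you need only $C^1$-regularity of the limit and no convergence of derivatives at all.

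\textbf{Extra care.} You verify the hypotheses of Proposition~\ref{prop:visc-prop}: boundedness of $F_0$ from $m_0(0)<\infty$, \ref{A1}--\ref{A2}, and $W(\cdot,0)\ge0$ by Lemma~\ref{lem:initial}. You also treat $t=0$ separately. The paper leaves both implicit, even though its convergence statement is only on $(0,\infty)\times(0,\infty)$.

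\textbf{One harmless slip.} Since $a$ is concave with $a(x)=x$ near $x=1$, you actually have $a(x)\le\min(x,2)$, not $\ge$. The correct lower bound is $a(x)\ge\min(x,1)$ (because $a$ is nondecreasing), and that is all your positivity argument needs.
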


\begin{proof}
	Proposition~\ref{prop:visc-prop} gives
	$2M^{\eps,\delta}-xM^{\eps,\delta}_x\ge0$
	for every $\delta>0$.
	On compact subsets of $(0,\infty)\times(0,\infty)$ the
	equation is uniformly parabolic as $\delta\to0$ (because $a(x)>0$ there), so
	$F^{\eps,\delta}\to F^{\eps}$ locally uniformly.

	Furthermore, each $F^{\eps,\delta}$ is concave in $x$
	(\cite{TV}, proof of Lemma~3.5, where the concavity is established at the
	level of the $\delta$-problem)
	and the limit $F^{\eps}$ is $C^1$ in $x$ (\cite{TV}, Lemma~3.6).
	By the convex analysis fact that locally uniform convergence of
	concave functions
	implies convergence of the derivatives at every point of
	differentiability of the limit,
	$F^{\eps, \delta}_x \to F^{\eps}_x$ pointwise on $(0,\infty)$.

	Passing to the limit in
	$2(mx-F^{\eps,\delta})-x(m-F^{\eps,\delta}_x)$ yields the claim.
\end{proof}

\section{Removal of the $m<\tfrac12$ obstruction}
\label{sec:removal}

\subsection{The improved barrier}
Throughout this section $F^\eps$ denotes the degenerate viscous approximation
of~\cite{TV} obtained from~\eqref{eq:visc-F} after $\delta\downarrow0$, namely
the classical solution of
\begin{equation}\label{eq:visc-eps}
	F^\eps_t+\tfrac12(F^\eps_x-m)(F^\eps_x-m-1)+\frac{F^\eps}{x}-m
	=\eps\,a(x)\,F^\eps_{xx},
	\qquad F^\eps(x,0)=F_0,\quad F^\eps(0,t)=0,
\end{equation}
with $a$ the cutoff~\eqref{eq:cutoff}. To keep the regularization visible we
write
\[
	G^\eps=xF^\eps_{xx},\qquad M=mx-F^\eps,\qquad q=M_x=m-F^\eps_x,\qquad
	W=2M-xM_x .
\]
From Lemma 3.5 and Lemma 3.6 of~\cite{TV},
\begin{equation}\label{eq:sec4-recall}
	0\le F^\eps_x\le m,\qquad F^\eps\ \text{is concave in }x,\qquad
	G^\eps\le0,\qquad G^\eps(0,t)=0,
\end{equation}
while Corollary~\ref{cor:eps} supplies the propagated invariant
\begin{equation}\label{eq:sec4-inv}
	W=2M-xM_x\ge0,\qquad\text{equivalently}\qquad \frac{M}{x}\ge\frac{M_x}{2}.
\end{equation}
Finally, hypothesis~\ref{A3} controls the initial curvature: for Bernstein data
$re^{-r}\le e^{-1}$ gives $xF_0''(x)\ge-m/e$, so for $0<m\le1$
\begin{equation}\label{eq:initcurv}
	G^\eps(\cdot,0)=xF_0''\ge-\frac{m}{e}>-1 .
\end{equation}

The following theorem replaces Lemma 3.7 of~\cite{TV}, extending their curvature
barrier from $0<m<\tfrac12$ to the full critical range $0<m\le1$.
The gain comes from the invariant~\eqref{eq:sec4-inv}, which sharpens
the zeroth-order
coefficient bound from $B\le2M_x$ to $B\le M_x$.

\begin{theorem}[Improved curvature barrier]\label{thm:barrier}
	Let $0<m\le1$, and let $F^\eps$ be the degenerate viscous
	approximation~\eqref{eq:visc-eps} associated with Bernstein-transform data
	satisfying the hypotheses of Theorem~\ref{thm:main}. If $\eps<\tfrac14$, then
	\[
		-1< xF^\eps_{xx}\le0\qquad\text{on }(0,\infty)^2 .
	\]
\end{theorem}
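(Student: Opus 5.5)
The plan is a maximum-principle argument for $H:=-G^\eps=-xF^\eps_{xx}=x\,q_x\ge0$, showing that $H$ can never reach the level $1$. The upper bound $xF^\eps_{xx}\le0$ is already contained in \eqref{eq:sec4-recall}. Since $H(\cdot,0)\le m/e<1$ by \eqref{eq:initcurv}, it suffices to show that at a first touching point $H(x_0,t_0)=1$ the equation forces $H_t<0$.

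\emph{Step 1 (equation for $H$).} Work with the $M$-form of \eqref{eq:visc-eps},
\[
M_t=\eps a\,M_{xx}+\tfrac12 q(q+1)-\frac{M}{x}.
\]
Differentiate twice in $x$ and set $r=q_x$. The inviscid part gives
\[
r_t=(q+\tfrac12)r_x+r^2-\frac{r}{x}+\frac{2(xq-M)}{x^2}\cdot\frac1x .
\]
Indeed, $\partial_x(-q/x)=-r/x+q/x^2$ and $\partial_x(M/x^2)=q/x^2-2M/x^3$, so the last term is $2(xq-M)/x^3$. Multiplying by $x$ and writing $xr_x=H_x-H/x$, I expect
\[
H_t=(q+\tfrac12)H_x+\eps(\cdots)+\frac1x\Bigl[H^2-H-(q+\tfrac12)H+\frac{2(xq-M)}{x}\Bigr].
\]
Here the viscous contributions $\eps(\cdots)$ are $\eps aH_{xx}$, first-order terms in $H_x$, and zeroth-order terms of the form $\eps a'H/x$ and $\eps a''H$.

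\emph{Step 2 (the dangerous coefficient).} The only term not controlled by $H$ itself is $B:=2(xq-M)/x$. The bound $M\ge0$ alone gives $B\le 2q=2M_x$, and this is where \cite{TV} needed $m<\tfrac12$. The invariant \eqref{eq:sec4-inv}, in the form $M\ge xq/2$, gives instead
\[
B\le 2q-q=q=M_x .
\]
At a point where $H=1$, $H_x=0$ and $H_{xx}\le0$, the bracket in Step 1 is then at most
\[
1-1-q-\tfrac12+q=-\tfrac12 .
\]
This is strictly negative for every $m\le1$ and uniformly in $q\in[0,m]$. This is the gain over \cite{TV}, where the same bracket was only bounded by $q-\tfrac12\le m-\tfrac12$.

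\emph{Step 3 (viscous terms and localization).} I need the $\eps$-terms at $H=1$ to be dominated by $\tfrac1{2x}$.
\begin{itemize}[leftmargin=2em]
\item Using $0\le a'\le1$ and $a''\le0$ from \eqref{eq:cutoff}, the term $\eps a''H$ has the favourable sign.
\item A term like $\eps a'H/x$ contributes at most $\eps/x$, and possibly a constant multiple such as $2\eps/x$. The hypothesis $\eps<\tfrac14$ keeps this below $\tfrac1{2x}$.
\item Near $x=0$, where $a=x$, the terms should combine into $\eps$ times a factor of order $H/x$. The same smallness of $\eps$ absorbs them.
\end{itemize}

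For the touching argument I would use $H(0,t)=0$ from $G^\eps(0,t)=0$. For large $x$, where $a\equiv2$ and the equation is uniformly parabolic, I need $H\to0$ or at least $H<1$. This should follow from the Hölder control \ref{A3} propagated as in \cite{TV}, or else by adding a small penalization $\gamma x$ or $\gamma t$ and letting $\gamma\downarrow0$, as in Lemma~\ref{lem:Fxbound}.

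The main obstacle I expect is the bookkeeping in Step 3. I must check that the exact viscous zeroth-order coefficients, including the term $2\eps a'r_x$ rewritten through $H$, really fit under the margin $\tfrac1{2x}$ when $\eps<\tfrac14$. A second point is justifying enough regularity of $F^\eps$ up to $x=0$ to run the first-touching argument; if this is delicate, I would run the argument on $F^{\eps,\delta}$ instead and pass to the limit using Corollary~\ref{cor:eps}.
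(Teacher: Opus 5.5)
Your proposal is correct and is essentially the paper's proof. The paper runs the same first-crossing argument for $G^\eps=-H$, localized by minimizing $G^\eps+\rho_k x$ with $\rho_k$ chosen after a near-minimizer so that $\rho_k x_k\to0$; this is exactly what your $\gamma x$ penalization needs, so that the $O(\gamma)$ drift terms stay below the $\tfrac1{2x}$ margin. It arrives at your bracket $H^2-(q+\tfrac32)H+B$ and uses the invariant to get $B\le M_x$.

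To settle your Step~3: the viscous part of the $H$-equation is $\eps aH_{xx}+2\eps\bigl(a'-\tfrac{a}{x}\bigr)H_x+\tfrac{\eps H}{x}\bigl(\tfrac{2a}{x}-2a'+xa''\bigr)$. The last term is at most $\tfrac{2\eps H}{x}$ because $a\le x$, $a'\ge0$ and $a''\le0$. So the $a'$ and $a''$ contributions are favourable, the whole factor vanishes where $a=x$, and the only cost is $2a/x\le2$, which is precisely why $\eps<\tfrac14$ suffices.
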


\begin{proof}
	We run a localized minimum principle for $G^\eps=xF^\eps_{xx}$,
	following~\cite{TV}, Lemma 3.7 and Remark 3.8, but with the
	invariant~\eqref{eq:sec4-inv} in place of the crude bound used there.

	\emph{Step 1.}
	Put $\alpha(T)=\inf_{(x,t)\in[0,\infty)\times[0,T]}G^\eps(x,t)$. As
	in~\cite{TV}, Lemma 3.7, $G^\eps=xF^\eps_{xx}$ is bounded and Hölder
	continuous on $[0,\infty)\times[0,T]$ and satisfies $G^\eps\le0$,
	$G^\eps(0,t)=0$ (cf.~\eqref{eq:sec4-recall}).
	Hence $\alpha$ is continuous and
	nonincreasing in $T$, and by~\eqref{eq:initcurv}, $\alpha(0)\ge-m/e>-1$. It
	therefore suffices to rule out a first time $T$ with $\alpha(T)=-1$.

	\emph{  We proceed by contradiction and suppose that there is a first time
		$T$ where $\alpha(T) = -1$.}

	\emph{Step 2.}
	Differentiating~\eqref{eq:visc-eps} twice in $x$,
	\begin{equation}\label{eq:312}
		\d_tF^\eps_{xx}+\Bigl(F^\eps_x-\bigl(m+\tfrac12\bigr)\Bigr)F^\eps_{xxx}
		+(F^\eps_{xx})^2
		+\frac{F^\eps_{xx}}{x}-\frac{2F^\eps_x}{x^2}+\frac{2F^\eps}{x^3}
		=\eps\bigl(a''F^\eps_{xx}+2a'F^\eps_{xxx}+aF^\eps_{xxxx}\bigr).
	\end{equation}

	\emph{Step 3.}
	Fix a large $k$. Choose $(y_k,s_k)\in[0,\infty)\times[0,T]$ with
	\[
		G^\eps(y_k,s_k)\le\alpha(T)+\tfrac{1}{2k},
	\]
	and then $0<\rho_k<\tfrac1k$ so small that $\rho_k y_k\le\tfrac{1}{2k}$. Here
	$\rho_k$ is the localization parameter, distinct from the already-removed
	regularization $\delta$. Since $G^\eps$ is bounded below and $\rho_k
		x\to+\infty$ as
	$x\to\infty$, the function $G^\eps(x,t)+\rho_k x$ attains its minimum over
	$[0,\infty)\times[0,T]$ at some point $(x_k,t_k)$. By minimality
	and the choice
	of $(y_k,s_k)$,
	\[
		\alpha(T)\ \le\ G^\eps(x_k,t_k)+\rho_k x_k\ \le\ G^\eps(y_k,s_k)+\rho_k y_k
		\ \le\ \alpha(T)+\tfrac1k,
	\]
	and since $G^\eps(x_k,t_k)\ge\alpha(T)$ this also gives $\rho_k
		x_k\le\tfrac1k$.

	Furthermore, $G^\eps+\rho_k x$ equals $0$ at $x=0$ and tends to
	$+\infty$ as
	$x\to\infty$, while on $\{t=0\}$ its values are
	at least $-m/e>\alpha(T)+\tfrac1k$ for
	$k$ large. So, $x_k,t_k>0$.

	Set $\alpha_k=G^\eps(x_k,t_k)$, so
	$\alpha_k\to-1$. At
	the localized minimum
	\[
		G^\eps_t\le0,\qquad G^\eps_x=-\rho_k,\qquad G^\eps_{xx}\ge0 .
	\]
	We translate these into pointwise relations for the derivatives of $F^\eps$.
	Since $G^\eps=xF^\eps_{xx}$,
	\[
		G^\eps_x=F^\eps_{xx}+x\,F^\eps_{xxx},
		\qquad
		G^\eps_{xx}=2F^\eps_{xxx}+x\,F^\eps_{xxxx},
	\]
	and, by the definition $\alpha_k=G^\eps(x_k,t_k)=x_kF^\eps_{xx}$, we have
	$F^\eps_{xx}=\alpha_k/x_k$ at $(x_k,t_k)$. Evaluating the first-order
	condition $G^\eps_x=-\rho_k$ there gives
	$F^\eps_{xx}+x_kF^\eps_{xxx}=-\rho_k$.
	Multiplying by $x_k$ and using $x_kF^\eps_{xx}=\alpha_k$,
	\begin{equation}\label{eq:locmin}
		x_k^2F^\eps_{xxx}=-x_kF^\eps_{xx}-\rho_k x_k=-\alpha_k-\rho_k x_k .
	\end{equation}
	Similarly the second-order condition $G^\eps_{xx}\ge0$ reads
	$2F^\eps_{xxx}+x_kF^\eps_{xxxx}\ge0$,
	i.e.\ $x_kF^\eps_{xxxx}\ge-2F^\eps_{xxx}$.
	Multiplying by $x_k$ and inserting~\eqref{eq:locmin},
	\begin{equation}\label{eq:locmin2}
		x_k^2F^\eps_{xxxx}\ \ge\ -2x_kF^\eps_{xxx}
		=\frac{2\bigl(\alpha_k+\rho_k x_k\bigr)}{x_k}
		=2F^\eps_{xx}+2\rho_k .
	\end{equation}

	\emph{Step 4.}
	Multiply~\eqref{eq:312} by $x_k^2$ and evaluate at $(x_k,t_k)$. On the
	left-hand side, $x_k^{2}\,\d_tF^\eps_{xx}=x_kG^\eps_t\le0$ may be dropped,
	\eqref{eq:locmin} turns the third-derivative term into
	$(m+\tfrac12-F^\eps_x)(\alpha_k+\rho_kx_k)$, and the remaining terms are
	$x_k^{2}(F^\eps_{xx})^{2}=\alpha_k^{2}$, $x_kF^\eps_{xx}=\alpha_k$, and
	$-2F^\eps_x+2F^\eps/x_k=B_k$. On the right-hand side, \eqref{eq:locmin},
	\eqref{eq:locmin2} and $a\ge0$ give
	\[
		\eps\bigl(a''x_k^{2}F^\eps_{xx}+2a'x_k^{2}F^\eps_{xxx}
		+a\,x_k^{2}F^\eps_{xxxx}\bigr)
		\ \ge\
		\eps\,\alpha_k\Bigl(\frac{2a}{x_k}-2a'+x_ka''\Bigr)
		-2\eps a'\rho_kx_k+2\eps a\rho_k ,
	\]
	with $a$, $a'$, $a''$ evaluated at $x_k$. Discarding the nonnegative term
	$2\eps a(x_k)\rho_k$ and moving $-2\eps a'\rho_kx_k$ to the left-hand side,
	we obtain
	\begin{equation}\label{eq:locineq}
		\alpha_k^2+\alpha_k\Bigl(m+\tfrac32-F^\eps_x\Bigr)+B_k
		+\rho_k x_k\Bigl(m+\tfrac12+2\eps a'(x_k)-F^\eps_x\Bigr)
		\ \ge\
		\eps\,\alpha_k\Bigl(\frac{2a(x_k)}{x_k}-2a'(x_k)+x_k a''(x_k)\Bigr),
	\end{equation}
	where $B_k=2\bigl(F^\eps-x_k F^\eps_x\bigr)/x_k$ and $F^\eps,F^\eps_x,M_x$ are
	evaluated at $(x_k,t_k)$.

	\emph{Step 5.}
	The cutoff~\eqref{eq:cutoff} satisfies $a\ge0$, $a'\ge0$, $a''\le0$ and
	$a(x)\le x$, so $2a/x-2a'+x a''\le2$. Since $\alpha_k\le0$, multiplying this
	bound by $\eps\alpha_k$ reverses it, so the right-hand side
	of~\eqref{eq:locineq} satisfies
	$\eps\alpha_k\bigl(2a/x-2a'+x a''\bigr)\ge 2\eps\alpha_k$.
	Absorbing
	$2\eps\alpha_k$ into the linear coefficient,
	\begin{equation}\label{eq:locineq2}
		\alpha_k^2+\alpha_k\Bigl(\tfrac32+M_x-2\eps\Bigr)+B_k
		+\rho_k x_k\Bigl(m+\tfrac12+2\eps a'(x_k)-F^\eps_x\Bigr)\ \ge\ 0 .
	\end{equation}

	\emph{Step 6.}
	Since $0\le F^\eps_x\le m\le1$, $a'$ is bounded, and $\rho_k x_k\le1/k$, the
	localization term in~\eqref{eq:locineq2} is $o_k(1)$. So,
	\begin{equation}\label{eq:locineq3}
		\alpha_k^2+\alpha_k\Bigl(\tfrac32+M_x-2\eps\Bigr)+B_k+o_k(1)\ \ge\ 0 .
	\end{equation}

	\emph{Step 7 (the invariant bound).}
	In the $M$-variable $B_k=2(F^\eps-x_k F^\eps_x)/x_k=2\bigl(M_x-M/x_k\bigr)$.
	Concavity of $F^\eps$ with $F^\eps(0,t)=0$ gives $B_k\ge0$, while the
	invariant~\eqref{eq:sec4-inv}
	gives $M/x_k\ge M_x/2$. Thus,
	\begin{equation}\label{eq:newB}
		0\le B_k\le M_x \,.
	\end{equation}
	This single factor of two, giving $B_k\le M_x$ in place of the crude
	$B_k\le2M_x$
	of~\cite{TV}, is what the whole extension turns on.

	\emph{Step 8.}
	By~\eqref{eq:newB} and $\alpha_k\le0$,
	\[
		\alpha_k^2+\alpha_k\Bigl(\tfrac32+M_x-2\eps\Bigr)+B_k
		\ \le\ \alpha_k^2+\alpha_k\Bigl(\tfrac32-2\eps\Bigr)+(1+\alpha_k)M_x .
	\]
	Since $0\le M_x\le m\le1$ and $1+\alpha_k\to0$ while $\alpha_k\to-1$,
	\begin{equation}\label{eq:firstcross}
		\limsup_{k\to\infty}\Bigl(\alpha_k^2+\alpha_k\bigl(\tfrac32+M_x-2\eps\bigr)
		+B_k\Bigr)\ \le\ 1-\Bigl(\tfrac32-2\eps\Bigr)=-\tfrac12+2\eps\ <\ 0
		\qquad(\eps<\tfrac14),
	\end{equation}
	which contradicts~\eqref{eq:locineq3}. Hence $\alpha(T)$ never reaches $-1$.
	Together with~\eqref{eq:sec4-recall}, this gives $-1<xF^\eps_{xx}\le0$ on
	$(0,\infty)^2$.
\end{proof}

Letting $\eps\downarrow0$, the vanishing-viscosity limit
$F=\lim_{\eps\to0}F^\eps$ inherits
\[
	-1\le xF_{xx}\le0
\]
in the same sense, and by the same compactness and stability argument, as in the
proof of Theorem 1.7 of~\cite{TV}. Together with $0\le F_x\le m$, this furnishes
the hypotheses of Theorem~\ref{thm:Fbern}.

\begin{remark}[Discriminant bookkeeping]\label{rem:discriminant}
	The argument uses no discriminant condition. Nonetheless the same
	factor of two
	is visible at the level of the roots of $P(\zeta)=\zeta^2+A\zeta+B$ with
	$A=\tfrac32+M_x-2\eps$. The improved bound $B\le M_x$ together with
	$0\le M_x\le1$ gives
	\[
		A^2-4B\ \ge\ \Bigl(\tfrac32+M_x-2\eps\Bigr)^2-4M_x
		=\Bigl(M_x-\tfrac12-2\eps\Bigr)^2+2-8\eps\ >\ 0\qquad(\eps<\tfrac14),
	\]
	whereas the crude bound $B\le2M_x$ yields a discriminant whose sign
	degenerates
	as $m\uparrow\tfrac12$. This degeneration is the root-level shadow of the
	same loss.
\end{remark}

\subsection{Why $m\le1$ is essential}
The first-crossing bound~\eqref{eq:firstcross} no longer sees $m$.
One might worry that it ``proves too much'', since for $m>1$ there is
no $C^1$ solution
(Theorem 1.4 of~\cite{TV}).
The resolution is that the hypotheses needed even to
\emph{run} the argument fail for $m>1$. The propagation
Proposition~\ref{prop:visc-prop} rests throughout on the gradient
bound~\eqref{eq:Fxbound}: it gives $q=M_x\ge0$ and, in Step~1, both the sign of
the quadratic term and the linear growth $F^{\eps,\delta}\le mx$. In turn,
\eqref{eq:Fxbound} is proved in Lemma~\ref{lem:Fxbound} from the two-sided
bound
$0\le F^{\eps,\delta}\le mx$, where $0$ and $mx$ are respectively a subsolution
and an exact solution of~\eqref{eq:visc-F}, and $0$ is a subsolution precisely
because
\[
	\tfrac12 m(m-1)\le0\quad\Longleftrightarrow\quad m\le1 .
\]
For $m>1$ this breaks
down as $0$ is no longer a subsolution.
So the propagation argument of
Proposition~\ref{prop:visc-prop} no longer applies and the bound $B\le M_x$
is unavailable.

\section{Proof of Theorem~\ref{thm:main}}\label{sec:completion}

We start this section by citing a theorem. Its proof is the existence part of
Section~3 of~\cite{TV}, which is written there for $0<m<\tfrac12$ and applies
on the full range $0<m\le1$ after one constant is adjusted.
Remark~\ref{rem:TVconstant} below identifies the adjustment.

\begin{theorem}
	\label{thm:Fbern}
	Let $0<m\le1$, and let $F_0$ be the Bernstein transform of a nonnegative
	measure $c_0$ on $(0,\infty)$ with
	\[
		m_1(0)=m,\qquad m_0(0)<\infty,\qquad m_2(0)<\infty .
	\]
	Let $F$ be the sublinear viscosity solution of the Hamilton--Jacobi
	equation~\eqref{eq:HJ}, obtained as the vanishing-viscosity limit
	in the scheme
	of~\cite{TV}, and assume that the estimates
	\begin{equation}\label{eq:postbarrier}
		0\le F_x\le m,\qquad -1\le xF_{xx}\le0
	\end{equation}
	hold on $(0,\infty)^2$, in the sense obtained from the vanishing-viscosity
	approximation. Then
	\[
		F\in C^\infty((0,\infty)^2)\cap C^1([0,\infty)^2),\qquad F_x(0,t)=m,
	\]
	and
	\[
		(-1)^{n+1}\,\d_x^n F\ge0\quad\text{on }(0,\infty)^2\quad\text{for
			every }n\ge1 .
	\]
	Consequently, for each $t>0$ the map $x\mapsto F(x,t)$ is a Bernstein function
	and
	\[
		F(x,t)=\int_0^\infty\bigl(1-e^{-sx}\bigr)\,c_t(\dd s)
	\]
	for a nonnegative measure $c_t$ on $(0,\infty)$ with
	\[
		\int_0^\infty s\,c_t(\dd s)=F_x(0,t)=m .
	\]
	The family $(c_t)_{t\ge0}$, with initial datum $c_0$, is a
	mass-conserving weak
	solution of the coagulation--fragmentation
	equation~\eqref{eq:CF} in the measure sense of Definition~\ref{def:measure}.
\end{theorem}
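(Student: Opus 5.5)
The proof follows the existence part of Section~3 of~\cite{TV}, with the a~priori estimates \eqref{eq:postbarrier} taking the place of the barrier that was available there only for $m<\tfrac12$. The argument has three stages.

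\emph{Stage 1: regularity.} First we obtain regularity of $F$. The bound $-1\le xF_{xx}\le 0$, together with the uniform H\"older control of $G^\eps=xF^\eps_{xx}$ inherited from \ref{A3}, gives compactness of $F^\eps_x$ in $C^{0}$ up to $x=0$. Hence $F\in C^1([0,\infty)^2)$. Combining $F(0,t)=0$ and $0\le F_x\le m$ with the equation \eqref{eq:HJ} as $x\to0^+$ (where $F/x\to F_x(0,t)$) gives
\[
\tfrac12(F_x(0,t)-m)(F_x(0,t)-m-1)+F_x(0,t)-m=0 .
\]
This identifies $F_x(0,t)\in\{m,\,m-1\}$, and the constraint $F_x\ge0$ selects $F_x(0,t)=m$ when $m<1$; at $m=1$ both roots coincide at the boundary value $m$.

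In the interior, we use the strict barrier $xF^\eps_{xx}>-1$ of Theorem~\ref{thm:barrier}. It makes the equation for $p=F_x$, namely
\[
p_t+\bigl(p-m-\tfrac12\bigr)p_x+\tfrac{p}{x}-\tfrac{F}{x^2}=0,
\]
nondegenerate along characteristics: they do not cross, since $\partial_x$ of the speed is $p_x=F_{xx}\ge -1/x$. Bootstrapping along characteristics then gives $C^\infty$ smoothness on $(0,\infty)^2$.

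\emph{Stage 2: complete monotonicity.} Next we prove $(-1)^{n+1}\partial_x^nF\ge0$ by induction on $n$. The cases $n=1,2$ are \eqref{eq:postbarrier}. For $n\ge3$, set $u_n=(-1)^{n+1}x^{n-1}\partial_x^nF$ and differentiate \eqref{eq:HJ} $n$ times. This yields a transport equation
\[
\partial_t u_n-\bigl(m+\tfrac12-F_x\bigr)\partial_xu_n+c_n u_n=R_n .
\]
Here $R_n$ is a sum of products of lower-order $u_j$ with nonnegative coefficients, coming from the Leibniz expansion of $\tfrac12F_x^2$ and of $\partial_x^n(F/x)=\int_0^1\theta^n\partial_x^{n+1}F(\theta x)\,d\theta\cdot(\ldots)$. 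The induction hypothesis gives $R_n\ge0$, the initial data satisfy $u_n(\cdot,0)\ge0$ because $F_0$ is Bernstein, and the same localized minimum principle as in Proposition~\ref{prop:visc-prop} gives $u_n\ge0$.

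The coefficient $c_n$ has the form $n F_{xx}+\tfrac{n}{x}+\ldots$, and the lower bound $xF_{xx}\ge-1$ is exactly what keeps $c_n$ from having the wrong sign near $x=0$. This is where the constant $\tfrac12$ of~\cite{TV} previously entered, and I expect this step to be the main obstacle. Concretely, one must check that in the argument of~\cite{TV} the only place $m<\tfrac12$ was used is to secure $xF_{xx}\ge-1$ (respectively $>-1$). Assuming that holds, the argument closes with the adjusted constant identified in Remark~\ref{rem:TVconstant}.

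\emph{Stage 3: the measure solution.} Finally, Bernstein's theorem represents $F(\cdot,t)=\int(1-e^{-sx})\,c_t(ds)$ with $\int s\,c_t(ds)=F_x(0,t)=m$. To show that $(c_t)$ is a weak solution, we reverse the Bernstein transform computation. Equation~\eqref{eq:HJ} is the weak formulation of Definition~\ref{def:measure} tested against $\phi_x(s)=1-e^{-sx}$. This family spans a dense subset of the admissible test functions, and a density argument using the uniform bounds on $m_0(t)$ and $m_1(t)=m$ extends the identity to all admissible $\phi$. The bound on $m_0(t)$ comes from \eqref{eq:zeroth} in integrated form, which is justified since $m\le1$.

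Existence in Theorem~\ref{thm:main} then follows, with uniqueness and non-existence for $m>1$ taken from~\cite{TV}.
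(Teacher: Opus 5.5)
There is a genuine gap, and it sits exactly where the paper's argument does its work. The paper obtains this theorem by rerunning the existence part of Section~3 of \cite{TV} with one change (Remark~\ref{rem:TVconstant}). There, $m<\tfrac12$ enters only through the characteristic-speed bound (3.19), $-1\le-(m+\tfrac12)\le\dot X\le-\tfrac12$, which for $m\le1$ must become $-\tfrac32\le\dot X\le-\tfrac12$. One then checks that the finite-time absorption at $x=0$ and the localization windows of their Proposition~3.10 use only that $\dot X$ is bounded and strictly negative. The barrier $-1\le xF_{xx}\le0$ is used in their Lemma~3.11.

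You instead assume that the only use of $m<\tfrac12$ was to secure $xF_{xx}\ge-1$. That is not the case. Moreover, if it were, there would be no constant left to adjust, so your closing appeal to Remark~\ref{rem:TVconstant} contradicts your own premise.

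Your Stage~2, where you place the difficulty, is also mis-structured. The term $\partial_x^n(F/x)=\int_0^1\theta^n\partial_x^{n+1}F(\theta x)\,d\theta$ involves $\partial_x^{n+1}F$, one order above $u_n$. So it is not a product of lower-order $u_j$, and the induction hypothesis cannot make $R_n\ge0$. It must stay in the operator: integrating by parts in $\theta$ gives $(-1)^{n+1}\partial_x^n(F/x)=\frac1x\bigl(v_n-n\int_0^1\theta^{n-1}v_n(\theta x)\,d\theta\bigr)$ with $v_n=(-1)^{n+1}\partial_x^nF$. This is a nonlocal term with a singular coefficient, and the local minimum principle of Proposition~\ref{prop:visc-prop} does not apply to it as is.

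Stage~1 fails at $m=1$, which is the endpoint this theorem adds. Letting $x\to0^+$ in \eqref{eq:HJ} (with $F_t(0,t)=0$) gives $\tfrac12y(y+1)=0$ for $y=F_x(0,t)-m$, so $F_x(0,t)\in\{m,m-1\}$. At $m=1$ these roots are $1$ and $0$; they do not coincide, and both satisfy $0\le F_x\le m$. In fact $F\equiv0$ is an exact stationary solution of \eqref{eq:HJ} when $m=1$, so the boundary equation cannot decide between them. You need a further argument, e.g.\ one of the following:
\begin{itemize}
\item show that $\lim_{x\to\infty}F(x,t)$ is conserved when $m=1$, so $F(\cdot,t)\not\equiv0$, and concavity then excludes $F_x(0,t)=0$;
\item show that $t\mapsto F_x(0,t)$ is continuous, starting from $F_0'(0)=m$.
\end{itemize}

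Both the boundary limit itself and the continuity route need $C^1$ regularity jointly in $(x,t)$ up to $x=0$. Your justification for that, a uniform-in-$\eps$ H\"older bound on $xF^\eps_{xx}$ inherited from \ref{A3}, is not supplied anywhere. The bound $-1\le xF_{xx}\le0$ alone only gives $|F_x(x,t)-F_x(y,t)|\le\log(x/y)$, which is no modulus of continuity at $x=0$.

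Finally, \eqref{eq:zeroth} is only formal. The bound on $m_0(t)$ should come from $F_t\le m$, as in Step~1 of Proposition~\ref{prop:visc-prop}, which gives $\sup_xF(x,t)\le m_0(0)+mt$.
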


\begin{remark}\label{rem:TVconstant}
	The existence argument in Section~3 of~\cite{TV} consists of their
	Propositions~3.9 and~3.10, Lemmas~3.11 and~3.12, and the proof of their
	Theorem~1.8. It uses the restriction $m<\tfrac12$ in exactly one place.
	Along the characteristics $X(t)$ of their Proposition~3.9 one has
	$\dot X=\d_xF-(m+\tfrac12)$ with $0\le\d_xF\le m$, and their bound (3.19)
	reads $-1\le-(m+\tfrac12)\le\dot X\le-\tfrac12$. The left inequality is
	exactly the statement $m\le\tfrac12$. On the full range $0<m\le1$ it must be
	replaced by
	\[
		-\tfrac32\ \le\ -\Bigl(m+\tfrac12\Bigr)\ \le\ \dot X\ \le\ -\tfrac12 .
	\]
	The estimates in that part of~\cite{TV} which invoke (3.19) through the
	upper bound $\dot X\le-\tfrac12$, such as their integral bound (3.27) and
	the proof of their Lemma~3.11, are unaffected, since that bound does not
	involve $m$. The lower bound enters only through boundedness, namely in the
	finite-time absorption of characteristics at $x=0$ and in the localization
	windows in the proof of their Proposition~3.10, where the constant $-1$
	becomes $-\tfrac32$ and the windows widen accordingly. The arguments use
	only that $\dot X$ is bounded and strictly negative, so nothing structural
	changes. We found no other appeal to $m<\tfrac12$ in that part of~\cite{TV}.
	The remaining occurrences of $\tfrac12$ there are the coefficient
	$m+\tfrac12$ and numerical constants. Finally, the proof of their Lemma~3.11
	uses precisely the bounds $-1\le x\d_x^2F\le0$ of \eqref{eq:postbarrier},
	which Theorem~\ref{thm:barrier} supplies on the full range after
	$\eps\downarrow0$.
\end{remark}

Sections~\ref{sec:initial}--\ref{sec:removal} supply the only new input needed
beyond~\cite{TV} so that the curvature barrier
\[
	0\le F_x\le m,\qquad -1\le xF_{xx}\le0
\]
is now valid on the full critical range $0<m\le1$.

\subsection{Proof of Theorem~\ref{thm:main}}
Existence on $0<m\le1$ follows from the barrier (Theorem~\ref{thm:barrier})
together with Theorem~\ref{thm:Fbern}, which furnishes, for the
Bernstein-transform datum $F_0$ of Theorem~\ref{thm:main}, a nonnegative
measure $c_t$ that is a mass-conserving weak solution of~\eqref{eq:CF}.
Uniqueness on $0<m\le1$ is Corollary 1.3 of~\cite{TV}.
The non-existence of mass-conserving solutions when $m>1$ is
Corollary 1.5 of~\cite{TV}.
\qed

\section{A Keller--Segel curiosity: the same half-slope
  invariant}\label{sec:legendre}

This section is interpretive and is not used in the proof.
The estimate
$2M-xM_x\ge0$ has a counterpart in the
standard radial partial-mass formulation of the two-dimensional
parabolic--elliptic Keller--Segel equation.
Consider the following parabolic--elliptic Keller--Segel equation
\begin{equation}\label{eq:KSsystem}
	\rho_t=\Delta\rho-\nabla\cdot(\rho\nabla c),\qquad -\Delta c=\rho
	\quad\text{in }\R^2 .
\end{equation}
For a radially symmetric solution, the cumulative mass
\begin{equation}\label{eq:Qdef}
	Q(x,t)=\int_{\{|y|^2\le x\}}\rho(y,t)\,\dd y
\end{equation}
in the variable $x=r^2$ satisfies the local equation~\cite{BKLN}
\begin{equation}\label{eq:KSraw}
	Q_t=4x\,Q_{xx}+\frac1\pi\,Q\,Q_x ,
\end{equation}
the diffusion coefficient $4x$ arising from $\d_r^2-\tfrac1r\d_r=4x\,\d_x^2$
under $x=r^2$.
The total mass is $Q(\infty,t)$ and the critical mass is $8\pi$.
Normalizing the mass by $8\pi$ and rescaling time, the normalized
cumulative mass $u=Q/(8\pi)$ satisfies
\begin{equation}\label{eq:KS}
	u_t=xu_{xx}+2uu_x ,
\end{equation}
where now the total mass is $\mu=u(\infty,t)$ and $\mu=1$ is the critical
value.

Let
\[
	P(x,t)=\int_0^x u(s,t)\,\dd s ,\qquad P_x=u .
\]
Integrating \eqref{eq:KS} from $0$ to $x$ (using $u(0,t)=0$) gives
\begin{equation}\label{eq:KSP}
	P_t=xP_{xx}-P_x+P_x^2 .
\end{equation}
The Keller--Segel analogue of $W=2M-xM_x$ is the half-slope
quantity $H:=2P-xP_x$.

For a smooth solution, $H$ satisfies a clean equation. One has
$H_x=P_x-xP_{xx}$ and $H_{xx}=-xP_{xxx}$, while differentiating
\eqref{eq:KSP} gives $(P_t)_x=xP_{xxx}+2P_xP_{xx}$. Hence
\[
	H_t=2P_t-x(P_t)_x
	=-x^2P_{xxx}+2P_x^2-2P_x-2xP_xP_{xx}+2xP_{xx},
\]
and the right-hand side is exactly $xH_{xx}+2(P_x-1)H_x$, so that
\begin{equation}\label{eq:KSH}
	H_t=xH_{xx}+2(P_x-1)H_x .
\end{equation}
Equation \eqref{eq:KSH} is linear in $H$, with drift $2(P_x-1)$ and
nonnegative diffusion coefficient $x$, and $H(0,t)=0$ because $P(0,t)=0$ and
$u(0,t)=0$. One might therefore expect the condition $H(\cdot,0)\ge0$ to be
propagated by the maximum principle. Making this rigorous requires handling
the degeneracy of the diffusion at $x=0$ and imposing decay hypotheses on $u$
at $x=\infty$, which we do not pursue in this aside.

A simple sufficient condition for the initial inequality is concavity of the
partial mass: if $u_0(0)=0$ and $u_0$ is concave, then
$u_0(s)\ge\frac{s}{x}u_0(x)$ for $0\le s\le x$, so
\[
	P_0(x)=\int_0^x u_0(s)\,\dd s\ge\frac{x}{2}u_0(x)=\frac{x}{2}P_{0,x}(x),
\]
that is, $2P_0-xP_{0,x}\ge0$. Since $u_x$ is proportional to the radial density,
this corresponds to a radially nonincreasing density.

The Legendre transform makes the analogy with the coagulation--fragmentation
invariant even more striking. For the computations that follow we assume
$P_{xx}=u_x>0$ so that $y=P_x$
is a genuine change of variables and $G_{yy}=1/P_{xx}$ makes sense.
Likewise, we assume $M_{xx}>0$ on the coagulation--fragmentation side.

Writing $G(y,t)=xy-P(x,t)$ with $y=P_x$, so $G_y=x$ and
$P=xy-G$,
\begin{equation}\label{eq:KShalf}
	2P-xP_x\ge0
	\ \Longleftrightarrow\
	2(xy-G)-xy\ge0
	\ \Longleftrightarrow\
	\frac{G}{G_y}\le\frac{y}{2},
\end{equation}
the same half-slope inequality as the Legendre form of our invariant
$2M-xM_x\ge0$.

The two models do not share the same Legendre
equation, however.
\eqref{eq:KSP} transforms to
\begin{equation}\label{eq:KSleg}
	G_t+\frac{G_y}{G_{yy}}=y(1-y),
\end{equation}
whereas the analogous Legendre transform of $M_t=\tfrac12 M_x(M_x+1)-M/x$ gives
\begin{equation}\label{eq:CFleg}
	G_t+\frac{G}{G_y}=\tfrac12\,y(1-y).
\end{equation}
What they share is the critical polynomial $y(1-y)$ and (potentially) the
propagated half-slope
inequality $G/G_y\le\tfrac{y}{2}$.

Both equations possess a critical mass separating global,
mass-conserving (respectively globally smooth) solutions from gelation
(respectively blow-up). The critical mass is $m=1$ for
coagulation--fragmentation and
$8\pi$ (normalized to $\mu=1$ above) for Keller--Segel.
It would be interesting to investigate whether there is
a deeper relationship between the two models.

\section*{Acknowledgements}
The author thanks Nguyen Van Tien, whose talk at the University of
Sciences in Ho Chi Minh City on June 29th, 2026 inspired the
idea relating the critical coagulation--fragmentation equation to the
Keller--Segel equation.

\medskip
\noindent\textbf{Use of AI tools.}
The large language model Claude (Anthropic) was used to verify
computations, cross-check the dependence on~\cite{TV}, and review
drafts. The review identified gaps in earlier versions of the proofs
of Lemma~\ref{lem:Fxbound} and Proposition~\ref{prop:visc-prop}.
Remark~\ref{rem:TVconstant} was also suggested by Claude to aid
readability. The author implemented and checked all the details and
takes full responsibility for all results and proofs.

\printbibliography

\end{document}